\title{Toric Surfaces, $K$-Stability and Calabi Flow}
\author{Hongnian Huang\thanks{The research of the author is financially supported by FMJH (Fondation Math\'ematique Jacques Hadamard).} }
\date{April 02, 2012}
\newtheorem{thm}{Theorem}[section]
\newtheorem{cor}[thm]{Corollary}
\newtheorem{lem}[thm]{Lemma}
\newtheorem{prop}[thm]{Proposition}
\newtheorem{conj}[thm]{Conjecture}
\newtheorem{rmk}[thm]{Remark}
\theoremstyle{definition}
\newtheorem{defn}{Definition}[section]
\theoremstyle{remark}
\begin{document}
\maketitle
\renewcommand{\sectionmark}[1]{}

\begin{abstract}
Let $X$ be a toric surface  and $u$ be a normalized symplectic potential on the corresponding polygon $P$. Suppose that the Riemannian curvature is bounded by a constant $C_1$ and 
$
\int_{\partial P} u ~ d \sigma < C_2,
$
then there exists a constant $C_3$ depending only on $C_1, C_2$ and $P$ such that the diameter of $X$ is bounded by $C_3$. Moreoever, we can  show that there is a constant $M > 0$ depending only on $C_1, C_2$ and $P$ such that Donaldson's $M$-condition holds for $u$. As an application, we show that if $(X,P)$ is (analytic) relative $K$-stable, then the modified Calabi flow converges to an extremal metric exponentially fast by assuming that the Calabi flow exists for all time and the Riemannian curvature is uniformly bounded along the Calabi flow.
\end{abstract}

\section{Introduction}
Let $X$ be a polarized K\"ahler manifold with an ample line bundle $L$. The Yau \cite{Y1}, Tian \cite{Ti1} and Donaldson \cite{D1} conjecture says that the existence of cscK metrics is equivalent to that $(X, L)$ is $K$-stable. One way to understand this conjecture is through the geometrical flow. For example, Tosatti \cite{T1} shows that if the curvature along the K\"ahler Ricci flow is uniformly bounded, $(X, L)$ is $K$-stable and asymptotically Chow-semistable, then the K\"ahler Ricci flow converges to a K\"ahler-Einstein metric exponentially fast. 

Geometrical flow not only can help us understand the Yau-Tian-Donaldson conjecture, but it can also help us understand other geometrical phenomenas. For instance, Donaldson \cite{D7} conjectures that the infimum of the Calabi energy is equal to the supremum of all normalized Futaki invaraints of all destabilizing test configurations. Assuming the long time existence of the Calabi flow, Sz\'ekelyhidi \cite{S1} proves Donaldson's conjecture when $X$ is a polarized toric variety. 

It is also expected that the Calabi flow can be related to the Yau-Tian-Donaldson conjecture. Chen \cite{ChenHe3} conjectures that the Calabi flow exists for all time and Donaldson \cite{D6} conjectures that if the Calabi flow exists for all time, then it converges to a cscK metric.

Chen's conjecture is proved in \cite{FH} for the case $X = \mathbb{C}^2 / (\mathbb{Z}^2 + i \mathbb{Z}^2)$ \footnote{In fact, the initial metric needs to be invariant under the translation in the imaginary part of the complex variables. }. Moreover, the curvature is uniformly bounded along the Calabi flow. In the same paper,  the authors also prove Donaldson's conjecture when $X = \mathbb{C}^n / (\mathbb{Z}^n + i \mathbb{Z}^n)$. Thus we should modify Chen and Donaldson's conjectures as follows:

\begin{conj}
Let $[\omega]$ be a K\"ahler class of $X$. Suppose that there is an extremal metric $\omega_0 \in [\omega]$. Then for any K\"ahler metric $\omega_1 \in [\omega]$, the Calabi flow starting from $\omega_1$ exists for all time and the Riemannian curvature is uniformly bounded along the flow.
\end{conj}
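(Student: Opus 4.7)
The plan is to establish long-time existence and the uniform curvature bound together, leveraging the existence of the extremal metric $\omega_0$ as the essential a priori input. First, I would use the fact that the modified Calabi functional is monotone non-increasing along the Calabi flow. Since $\omega_0$ is a minimizer, this yields a uniform upper bound on the modified Calabi energy of $\omega_t$, and hence an $L^2$ control on the deviation of the scalar curvature from its extremal profile. In the toric setting this translates directly into integral control on the normalized symplectic potentials $u_t$; in particular, combined with monotonicity of the relative $K$-energy and analytic relative $K$-stability, one should extract the uniform boundary bound $\int_{\partial P} u_t \, d\sigma \leq C_2$ for all $t$ in the lifespan of the flow.

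Second, I would prove long-time existence by contradiction. Suppose the maximal time is $T < \infty$. Standard short-time theory implies that finite-time breakdown forces the Riemannian curvature to blow up as $t \to T$. In the toric case, the plan is to first promote the $L^2$ scalar curvature control above to an $L^\infty$ bound on $S(\omega_t)$ via parabolic Moser iteration applied to the evolution equation of $S$; this bound must be shown to persist up to $T$. Pairing this zeroth-order scalar bound with Chen--He-type higher order estimates, and feeding the result into the diameter bound and the $M$-condition furnished by the main theorem of the paper, would yield a uniform Riemannian curvature bound on $[0, T)$, contradicting breakdown at $T$.

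Third, for the uniform curvature bound on all of $[0, \infty)$, I would bootstrap the same scheme in a time-independent fashion. The diameter bound and the $M$-condition, together with the boundary integral bound from the first step, keep the K\"ahler metrics $\omega_t$ in a compact family of symplectic potentials. On this family one expects uniform Sobolev and injectivity radius estimates, and hence uniform Shi-type higher derivative estimates for $\mathrm{Rm}$ once a zeroth-order bound is in hand. The combination of these with the decay of the modified Calabi energy should give the desired time-uniform pointwise curvature bound.

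The main obstacle, and the reason this remains a conjecture, is a circularity in the argument above: to invoke the paper's diameter and $M$-condition estimates one already needs a Riemannian curvature bound, yet this is precisely what must be proved. Breaking the circle requires an independent a priori bound on the scalar curvature, or equivalently on $\det(u_{ij})$ on $P$, that does not presuppose curvature control. In full generality this is a genuine analytic obstruction; for toric surfaces one might hope to exploit the convexity of $P$, the structure of the Abreu equation, and a delicate maximum principle argument near $\partial P$, together with the relative $K$-energy properness inherited from analytic relative $K$-stability. I would expect this passage from integral energy bounds to a pointwise curvature bound, especially near the boundary of $P$ where $u$ may degenerate, to be the crux of any complete proof.
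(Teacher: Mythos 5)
The statement you were asked to prove is presented in the paper as a \emph{conjecture} (the first of four in the introduction), and the paper offers no proof of it. The author only proves Theorem (\ref{convergence}), which \emph{assumes} both long-time existence and a uniform Riemannian curvature bound as hypotheses and deduces exponential convergence under relative $K$-stability. So there is no argument in the paper to compare yours against, and your proposal must be judged on its own terms.

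On its own terms the proposal is not a proof, and you correctly diagnose the fatal gap yourself in your final paragraph: Theorems (\ref{dia}) and (\ref{M}) take a pointwise Riemannian curvature bound as an \emph{input}, so they cannot be used to produce one; every later step of your scheme sits downstream of this circularity. Beyond that, several intermediate steps would fail even as heuristics. The conjecture is stated for a general K\"ahler class, while your entire apparatus (the polytope $P$, symplectic potentials, the boundary integral $\int_{\partial P} u\, d\sigma$, Abreu's equation) is toric. The monotonicity of the (modified) Calabi energy gives a uniform $L^2(\omega_t)$ bound on $S(\omega_t)-\theta$, but promoting this to an $L^\infty$ bound by ``parabolic Moser iteration'' is not available: the scalar curvature evolves by a fourth-order equation with no maximum principle, and Moser iteration would in any case require a uniform Sobolev constant, which is itself not known a priori along the flow. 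Finally, the extension criterion of Chen--He for the Calabi flow is phrased in terms of the Ricci curvature, so even a hypothetical $L^\infty$ bound on $S$ would not by itself rule out finite-time breakdown. The honest conclusion is the one you reach: absent an independent a priori bound on the curvature (or on $\det(u_{ij})$ near $\partial P$ in the toric case), the statement remains open --- which is precisely why the paper records it as a conjecture rather than a theorem.
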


\begin{conj}
\label{c1}
Let $[\omega]$ be a K\"ahler class of $X$. Suppose that $\omega_0 \in [\omega]$ is an extremal metric. Let $\omega_1 \in [\omega]$ be any K\"ahler metric invariant under the maximal compact subgroup of the identity component of the reduced automorphism group. If the Calabi flow starting from $\omega_1$ exists for all time and the Riemannian curvature is uniformly bounded along the flow, then the modified Calabi flow converges exponentially fast to an extremal metric in $[\omega]$.
\end{conj}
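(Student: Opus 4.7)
My plan is to prove the conjecture in the toric surface setting of the abstract. Under the torus symmetry a K\"ahler metric on $X$ is encoded by a normalized symplectic potential $u$ on the polygon $P$, and in these coordinates the modified Calabi flow becomes a parabolic evolution for $u(\cdot,t)$ driven by the relative Abreu operator. The strategy has three stages: verify along the flow the hypotheses of the main theorem stated in the abstract, extract a smooth extremal limit, and promote this to exponential convergence of the entire flow.

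First, I would show that $\int_{\partial P} u(\cdot,t)\, d\sigma$ stays uniformly bounded in $t$. The relative Mabuchi K-energy is non-increasing along the modified Calabi flow, and (analytic) relative K-stability bounds it below, so it remains in a fixed interval. In Donaldson's toric formula the boundary integral is the leading linear functional of $u$, while the remaining entropy term is convex and easily controlled, giving the constant $C_2$. Combined with the assumed curvature bound $C_1$, the main theorem then supplies a uniform diameter bound and Donaldson's uniform $M$-condition along the flow.

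Second, the $M$-condition forces $\{u(\cdot,t)\}$ into a $C^0$-compact family of normalized symplectic potentials. Together with the curvature bound, interior bootstrapping for the Abreu-type equation yields uniform smooth estimates on compact subsets of the interior of $P$, while Donaldson's analysis under the $M$-condition controls the behaviour near the facets and vertices. Passing to a subsequence of times, $u(\cdot,t_i)$ converges smoothly to some $u_\infty$ on $P$. Since the modified Calabi energy is monotone decreasing along the flow and bounded below, $u_\infty$ must be a critical point of the modified Calabi functional, i.e.\ an extremal symplectic potential.

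The final stage, which I expect to be the principal difficulty, is promoting this subsequential convergence to exponential convergence of the full flow. I would establish a \L ojasiewicz--Simon inequality for the relative K-energy at $u_\infty$ on the toric configuration space and combine it with the standard energy dissipation identity of the modified Calabi flow to obtain exponential decay in $L^2$ of the distance to $u_\infty$; the uniform $M$-condition and curvature bound then upgrade this to exponential convergence in every $C^k$. The delicate point is formulating the \L ojasiewicz inequality modulo the kernel of the linearized modified Calabi operator --- generated by the extremal Hamiltonian Killing fields of $u_\infty$ --- and excluding slow drift of the flow along this kernel before the inequality becomes effective.
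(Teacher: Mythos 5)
Your stages 1 and 2 follow the paper's architecture (energy monotonicity plus coercivity to bound $\int_{\partial P}\tilde u\,d\sigma$, then the diameter and $M$-condition theorems and Donaldson-type compactness to extract an extremal subsequential limit), but there are two genuine problems. First, Conjecture \ref{c1} assumes the \emph{existence} of an extremal metric, not relative $K$-stability, yet your stage 1 silently invokes analytic relative $K$-stability. What is actually needed to bound the boundary integral along the flow is the coercive estimate $\mathcal{L}(f)\ge\lambda\int_{\partial P}f\,d\sigma$ of Proposition \ref{stable}, fed into Donaldson's Proposition 5.1.8 in \cite{D1}; your remark that ``the entropy term is convex and easily controlled'' is not how that bound works, since $-\int_P\log\det D^2u$ can be arbitrarily negative and is only controlled against the boundary term through the stability hypothesis. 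The paper supplies the missing bridge by citing Zhou--Zhu \cite{ZZ}: on a toric surface the existence of an extremal metric forces analytic relative $K$-stability, and then Theorem \ref{convergence} applies. Without that step (or an independent derivation of coercivity from the existence of $\omega_0$), you do not obtain the constant $C_2$ required by Theorems \ref{dia} and \ref{M}.

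Second, your stage 3 is a genuinely different route from the paper's, and it is the part you leave unproven. The paper does not establish a \L ojasiewicz--Simon inequality; instead it shows the flow eventually lies in a small neighborhood of the extremal potential --- bounding the normalizing affine functions via the Calabi--Chen distance-decreasing property \cite{CC}, passing to K\"ahler potentials through Donaldson's identity $\|\varphi_i\|_{L^\infty}=\|u(t_i)-u_\infty\|_{L^\infty}$ \cite{D4}, and obtaining uniform $C^{3,\alpha}$ bounds from Chen--He \cite{ChenHe} --- and then invokes the Huang--Zheng stability theorem \cite{HZ}, which already packages exponential convergence of the modified Calabi flow near an extremal metric, including the reduction modulo the extremal vector field that you describe as ``kernel drift.'' Your \L ojasiewicz plan is plausible and would also rule out the drift, but as written it is exactly the acknowledged principal difficulty: one must prove the inequality for this fourth-order flow and upgrade the $L^2$ dissipation to convergence in a topology strong enough to close the argument, which amounts to re-proving the content of \cite{HZ}. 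Either carry that out or quote such a local stability result; as it stands the final step is a gap, and the normalization issue you defer to it must also be addressed (the paper handles it separately with the distance monotonicity argument).
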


We want to link Chen and Donaldson's conjectures to the Yau-Tian-Donaldson conjecture. Due to recent developments: \cite{ACGT}, \cite{D5} and \cite{S2}, we only consider the case when $X$ is a toric variety. We have the following conjectures:

\begin{conj}
Suppose $(X,L)$ is relative $K$-stable, then for any K\"ahler metric $\omega \in c_1(L)$, the Calabi flow starting from $\omega$ exists for all time and the Riemannian curvature is uniformly bounded along the flow.
\end{conj}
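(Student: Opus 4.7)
The plan is to derive from relative $K$-stability the two input hypotheses needed by the first theorem stated in the abstract — a uniform bound on $\int_{\partial P} u\, d\sigma$ and a uniform Riemannian curvature bound — and then bootstrap via the resulting diameter bound and Donaldson's $M$-condition to long-time existence. Throughout, I would work on the symplectic side, where the Calabi flow in $c_1(L)$ becomes an Abreu-type parabolic equation for the normalized symplectic potential $u(t)$ on the moment polytope $P$.

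Along this flow the modified Mabuchi functional $\mathcal{M}$ is non-increasing, so $\mathcal{M}(u(t)) \le \mathcal{M}(u(0))$ for all $t$. Following Donaldson's and Sz\'ekelyhidi's variational framework for toric surfaces, relative $K$-stability should yield a coercivity estimate of the shape
\[
\int_{\partial P} u\, d\sigma \le C_1 \mathcal{M}(u) + C_2
\]
on the space of normalized symplectic potentials, by testing $\mathcal{M}$ against rational piecewise-linear convex functions and using that all normalized modified Futaki invariants are strictly positive. Combined with monotonicity of $\mathcal{M}$, this gives a time-independent bound on the boundary integral along the flow; this is the more accessible half of the problem and essentially a toric analogue of properness arguments.

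The genuinely hard half is a uniform curvature bound. I would attempt a contradiction/blow-up argument: if $|\mathrm{Rm}|$ were unbounded along the flow, rescale near a curvature-concentration point and extract a pointed Cheeger–Gromov limit. The bound on $\mathcal{M}$ and on $\int_{\partial P} u\, d\sigma$ should force the limit to be a complete, smooth, scalar-flat (or extremal-type) K\"ahler model of finite energy, while any nontrivial bubble should in turn produce a destabilizing test configuration for $(X,L)$, contradicting relative $K$-stability. Once the curvature is controlled, the main theorem of this paper supplies the diameter bound and the $M$-condition uniformly in $t$, and these in turn yield the higher-order estimates on $u(t)$ needed to continue the flow indefinitely.

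The main obstacle is clearly this last step: extracting a Riemannian curvature bound from relative $K$-stability alone, without already assuming the flow behaves well, would require a partial-$C^0$-type input for the Calabi flow analogous to what is now known for the K\"ahler–Ricci flow but essentially open here. This is precisely why the statement is posed as a conjecture, and why the companion convergence result in the abstract takes the curvature bound as a hypothesis rather than deriving it.
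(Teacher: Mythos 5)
There is no proof to compare against: the statement you were given is Conjecture~3 of the paper, and the author leaves it open. Indeed, the paper's actual theorem (Theorem~\ref{convergence}) takes precisely the long-time existence and the uniform curvature bound as \emph{hypotheses}, and under those hypotheses proves exponential convergence of the modified Calabi flow; nowhere does the paper derive either hypothesis from relative $K$-stability. So your proposal cannot be ``essentially the same approach as the paper'' — the paper has no approach to this statement at all.

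As for the proposal itself, the first half is sound and in fact parallels what the paper does elsewhere: by Proposition~\ref{stable} (via Donaldson's Proposition~5.2.2 and Wang--Zhou), relative $K$-stability gives $\mathcal{L}(f)\ge\lambda\int_{\partial P}f\,d\sigma$ for normalized potentials, and combined with the monotonicity of the modified Mabuchi energy along the flow (Donaldson's Propositions 5.1.2 and 5.1.8, used in Section~7 of the paper) this bounds $\int_{\partial P}\tilde u(t)\,d\sigma$ uniformly in $t$. The genuine gap is the second half, and you have correctly identified it yourself: there is no known mechanism that converts a curvature-concentration bubble of the Calabi flow into a destabilizing test configuration, and the blow-up analysis you invoke is circular — the compactness and regularity tools available for the Calabi flow (Chen--He, Streets, and the estimates (\ref{reg}) used in this paper) all \emph{require} an a priori $L^\infty$ curvature bound as input, which is exactly what you are trying to produce. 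Moreover, long-time existence itself is not known without such a bound, so one cannot even speak of ``the flow'' on which the blow-up argument is to be run. Since your proposal concedes this step is open, it is a research program, not a proof, and the statement remains a conjecture exactly as the paper presents it.
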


\begin{conj}
\label{conj}
Suppose $(X,L)$ is relative $K$-stable. Let $\omega \in c_1(L)$ be any K\"ahler metric invariant under the toric action. If the Calabi flow starting from $\omega$ exists for all time and the Riemannian curvature is uniformly bounded along the flow, then the modified Calabi flow converges exponentially fast to an extremal metric in $c_1(L)$.
\end{conj}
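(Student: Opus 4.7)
My plan is to pass to the symplectic potential picture on the polygon $P$ and combine the paper's main diameter and $M$-condition theorem with Donaldson's interior regularity theory and a linearization argument at the extremal limit.

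First, I would rewrite the modified Calabi flow on toric-invariant K\"ahler metrics as an evolution $\dot u = S(u) - S_{\mathrm{ex}}$ for the normalized symplectic potential $u$ on $P$, where $S_{\mathrm{ex}}$ is the affine function associated with the extremal vector field. Along this flow the relative Mabuchi functional is monotone decreasing with derivative $-\int_P (S(u) - S_{\mathrm{ex}})^2 \, dx$. Analytic relative $K$-stability gives a uniform lower bound for this functional; since in the toric setting the Mabuchi functional splits as $\int_{\partial P} u \, d\sigma$ minus a controlled linear functional in $u$, this lower bound converts, after normalization of $u$, into a uniform bound $\int_{\partial P} u \, d\sigma \leq C_2$ along the flow.

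With this boundary integral bound and the assumed uniform Riemannian curvature bound, the paper's main theorem applies uniformly in $t$: the diameter of $(X,\omega(t))$ stays bounded and Donaldson's $M$-condition holds uniformly for $u(\cdot, t)$. Donaldson's interior estimates then produce uniform $C^{k,\alpha}$ bounds for $u(\cdot, t)$ on compact subsets of the interior of $P$; together with parabolic regularity and the curvature bound, $\{u(\cdot, t)\}_{t \geq 0}$ is precompact in a suitable H\"older topology. Since $\int_0^\infty \int_P (S(u)-S_{\mathrm{ex}})^2 \, dx \, dt < \infty$ by Mabuchi monotonicity and the $K$-stability lower bound, any subsequential limit $u_\infty$ must satisfy $S(u_\infty) = S_{\mathrm{ex}}$ and hence defines an extremal metric in $c_1(L)$.

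To upgrade subsequential convergence to exponential convergence of the full flow, I would linearize at $u_\infty$: the linearization of the modified Calabi flow is governed by the Lichnerowicz operator at $u_\infty$, whose restriction to toric-invariant functions orthogonal to holomorphic potentials has a positive spectral gap. Once the flow enters a sufficiently small neighborhood of $u_\infty$, a standard energy argument using this gap converts the $L^2$ decay of the relative Calabi energy into $\|u(t) - u_\infty\|_{C^k} \leq C e^{-\delta t}$ and pins down the limit uniquely. The main difficulty I expect is precisely this last step: extracting full (rather than subsequential) convergence, and propagating the interior regularity furnished by the $M$-condition uniformly up to the boundary of $P$ so that the linearized analysis is valid on all of $P$. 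The $M$-condition supplied by the main theorem is what makes this transition feasible, since it prevents the symplectic potential from degenerating near the vertices of $P$ along the flow and thereby rules out the escape-to-infinity scenarios that $K$-stability is designed to exclude.
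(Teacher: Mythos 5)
Your first two stages track the paper's own route: relative $K$-stability (through Donaldson's Propositions 5.1.2 and 5.1.8) together with monotonicity of the modified Mabuchi energy yields the uniform bound $\int_{\partial P}\tilde u(t)\,d\sigma\le C_2$ for the normalized potentials along the flow, and then Theorems \ref{dia} and \ref{M} feed into Donaldson's compactness machinery (lower injectivity radius bound from the $M$-condition, the regularity theorem, interior estimates for Abreu's equation, and the boundary-condition argument of \cite{D3}) to produce a subsequential limit which is an extremal potential on the same polygon. Your stated reason for the boundary bound --- that the Mabuchi functional ``splits'' as $\int_{\partial P}u\,d\sigma$ minus a controlled linear term --- is imprecise because of the $-\int_P\log\det(D^2u)\,dx$ term; the coercivity you want is exactly the content of Donaldson's Propositions 5.1.2 and 5.1.8 under Proposition \ref{stable}, so this is a presentational slip rather than a gap.

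The genuine gap is in the final step, which you flag as the main difficulty and then dispose of with ``a standard energy argument''. The compactness argument only gives convergence of the \emph{normalized} potentials $u(t_\alpha+t,x)+l^{(\alpha)}(x)$; a priori the affine functions $l^{(\alpha)}$ could blow up, i.e.\ the actual flow could drift to infinity along the complexified torus orbit while its normalizations converge, in which case the flow never enters a neighborhood of any fixed $u_\infty$ and no linearization or spectral-gap argument at $u_\infty$ ever applies. The paper closes precisely this hole using the Calabi--Chen theorem that the (modified) Calabi flow decreases geodesic distance, which bounds $\int_P u^2(t,x)\,dx$ and hence the $l^{(\alpha)}$, so that $u(t_i,x)$ itself converges to a fixed extremal potential. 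Second, even granted that, your ``energy argument using the gap'' is essentially the Huang--Zheng stability theorem \cite{HZ}, and to invoke it (or to run your own linearized analysis) one needs closeness to the extremal metric in a strong norm, not just the interior estimates on $P_\epsilon$: the paper obtains this via Donaldson's identity $\|\varphi_i\|_{L^\infty}=\|u(t_i)-u_\infty\|_{L^\infty}$ and Chen--He's estimate to get uniform $C^{3,\alpha}$ bounds on the K\"ahler potentials. Your sketch acknowledges the boundary-regularity issue but supplies neither the drift control nor this up-to-the-boundary closeness, so as written the exponential convergence does not follow.
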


In this note, we prove Conjecture (\ref{conj}) when $X$ is a toric surface and $(X,L)$ is (analytic) relative $K$-stable. First, we prove that 

\begin{thm}
\label{dia}
Suppose $u$ is a normalized symplectic potential such that the Riemannian curvature is bounded by $C_1$ and there is a constant $C_2$ such that
$$
\int_{\partial P} u ~ d \sigma < C_2.
$$
Then there is a constant $C(C_1, C_2, P)$  depending only on $C_1, C_2$ and $P$ such that the diameter of $X$ and the maximal value of $u$ is bounded by $C(C_1, C_2, P)$.
\end{thm}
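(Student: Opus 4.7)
The plan is to first establish the pointwise bound $\max_{\bar P} u \le C(C_1,C_2,P)$ and then derive the diameter bound as a consequence, using the explicit toric description $g = u_{ij}\,dx^i dx^j + u^{ij}\,d\theta^i d\theta^j$ of the Riemannian metric on $P^\circ \times T^2$. The normalization of $u$ ensures $u\ge 0$ on $\bar P$, so only an upper estimate is required.

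First, convexity of $u$ on $\bar P$ forces $\max_{\bar P} u$ to be attained at a vertex of $P$, so it suffices to bound $u(v)$ for each vertex $v$. On any edge $E$, $u|_E$ is convex, and the boundary integral hypothesis $\int_{E} u\,d\sigma \le C_2$ together with convexity gives $\min_{E} u \le C_2/|E|$ and a uniform upper bound for $u$ on any compact interior sub-arc of $E$. To push this estimate all the way to the vertex $v$ I would use the curvature bound: the Guillemin boundary condition lets us decompose $u = \sum_k l_k \log l_k + \phi$ near $\partial P$ with $\phi$ smooth up to $\bar P$, and a bound on the full Riemannian curvature tensor translates, via Abreu's formula for the scalar curvature and the real Monge-Amp\`ere structure of the toric metric, into a Hessian bound on $\phi$ up to and including the vertex. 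Integrating this Hessian bound along $E$ from a point where $u$ is already controlled to the vertex $v$ yields the desired bound $u(v)\le C$.

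Once $\max_{\bar P} u \le C$ is in hand, the diameter bound follows by estimating lengths of explicit paths between any two points of $X$. Taking a path whose projection to $P$ is a straight segment and whose torus component stays in one fibre, the horizontal length is bounded by $\int \sqrt{u_{ij}\dot x^i \dot x^j}\,dt$ and the torus length by $2\pi\max_{\bar P}\sqrt{u^{ij}}$. Both are controlled through the convexity of $u$, the Guillemin structure near $\partial P$, and the $C^2$-bound on $\phi$ from the previous step: the sup bound on $u$ bounds the Hessian $u_{ij}$ from above modulo the prescribed logarithmic blow-up, while the curvature hypothesis bounds $u_{ij}$ from below in the regularized sense, whence $u^{ij}$ is also controlled.

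The technical crux is the vertex estimate in the first step. The boundary integral hypothesis only controls averages on edges, and the curvature only controls derivatives, so genuinely two-dimensional convexity/curvature interplay is needed to bridge the two at a corner of $P$, where the two $l\log l$ singularities superpose and the smooth part $\phi$ is the only free datum. The restriction to toric surfaces, rather than higher-dimensional toric varieties, is exactly what makes this planar analysis near a vertex tractable.
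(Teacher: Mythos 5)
Your outline identifies the right two targets (bound $\max_{\bar P}u$ at the vertices, then bound lengths of explicit paths), but both steps contain genuine gaps, and you flag the first one yourself as an unproved ``technical crux''. For the vertex estimate, the assertion that a curvature bound ``translates, via Abreu's formula for the scalar curvature and the real Monge--Amp\`ere structure, into a Hessian bound on $\phi$ up to and including the vertex'' is not a proof: Abreu's equation only involves the trace quantity $R=-\sum u^{ij}_{~ij}$, and no pointwise Hessian bound on the smooth part follows from it; even with the full curvature tensor such a bound is not automatic. What closes this step in the paper is a one--dimensional reduction: writing $u=\tfrac12(x_1\ln x_1+x_2\ln x_2)+f$ in a corner chart and letting $V$ be the restriction of $u$ to an edge (or to any segment emanating from the vertex), one has $\bigl(\tfrac{1}{V''}\bigr)''=u^{11}_{~11}$ on the edge (a boundary computation, using that several curvature components vanish there, so that $|u^{11}_{~11}|\le C_1$), and for interior segments Donaldson's Lemma 3 in \cite{D3}, $\bigl(\tfrac{1}{V''}\bigr)''\le |Rm|$. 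Combined with the Guillemin initial data $\tfrac{1}{V''}(0)=0$ and $\bigl(\tfrac{1}{V''}\bigr)'(0)=2$, integration gives two--sided bounds on $V''$, hence on $f_{11}$ and then on $f_1$ and $u$ at the vertex, once the boundary integral hypothesis has controlled $u$ and its edge derivative at a fixed interior point of the edge. Without this mechanism (or an equivalent substitute), your first step is exactly the content of the theorem.

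The diameter step fails as proposed. A sup bound on a convex function does not bound its Hessian from above, even modulo the prescribed logarithmic blow--up: potentials close to a piecewise--linear function with a sharp crease in the interior of $P$ have bounded $\max u$ but arbitrarily large $u_{ij}$. Likewise, controlling $u^{ij}$ (needed for your fibre--length term $2\pi\max\sqrt{u^{ij}}$) amounts to a uniform positive lower bound on $D^2u$, which does not follow from the hypotheses without serious work; in the paper such interior bounds only appear later, via the $M$--condition and Lemmas 4 and 6 of \cite{D3}. The paper sidesteps both issues: it joins an arbitrary point of $X$ to the torus--fixed point over a vertex by the horizontal path lying over a straight segment, so no fibre length is needed, and it estimates $\int_0^1\sqrt{V''(t)}\,dt$ by a dyadic decomposition plus Cauchy--Schwarz, $\int_{2^{-i-1}}^{2^{-i}}\sqrt{V''}\le\sqrt{2^{-i-1}\bigl(\tfrac{\ln 2}{2}+g'(2^{-i})-g'(2^{-i-1})\bigr)}$, which requires only a uniform bound on $g'$, the derivative of the smooth part of $V$ along the segment. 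That bound comes again from the curvature hypothesis through Lemma 3 of \cite{D3}, not from $\max_{\bar P}u$; so the quantitative bridge you would need between the sup bound and the metric length is missing.
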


This result enables us to show a compactness result up to diffeomophisms. However, we need to show that the complex structure does not jump when we take limits for a sequence of the Calabi flow. In order to do that, we need to use Donaldson's compactness result in toric surfaces \cite{D3}. The following theorem enables us to use Donaldson's compactness result.

\begin{thm}
\label{M}
Under the assumption of Theorem (\ref{dia}), there exists a constant $M > 0$ depending only on $C_1, C_2, P$ such that $u$ satisfies the $M$-condition.
\end{thm}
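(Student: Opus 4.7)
The plan is to verify Donaldson's $M$-condition in its two parts: the $L^{1}$ boundary bound $\int_{\partial P} u\,d\sigma \le M$, and the uniform bound on the tangential derivative of $u$ along each edge of $P$. The first part is immediate from the hypothesis $\int_{\partial P} u\,d\sigma < C_{2}$ by taking $M \ge C_{2}$, so the substance of the proof concerns the derivative bound.

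For the derivative bound, I would exploit the fact that for each edge $E$ of $P$, the preimage $\pi^{-1}(E)\subset X$ under the moment map is a $T^{2}$-invariant holomorphic $\mathbb{CP}^{1}$. The restriction $g|_{\pi^{-1}(E)}$ is a smooth $S^{1}$-invariant K\"ahler metric on this $\mathbb{CP}^{1}$, and its symplectic potential (with respect to the induced $S^{1}$-action and moment interval $E$) agrees with $u|_{E}$ up to controlled Guillemin singular terms contributed by the two adjacent edges meeting $E$ at its vertices. Theorem~\ref{dia} provides a constant $C=C(C_{1},C_{2},P)$ bounding both $\mathrm{diam}(X,g)$ and $\max_{P}u$; in particular $\mathrm{diam}(\pi^{-1}(E),g|_{\pi^{-1}(E)})\le C$. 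Since $\pi^{-1}(E)$ is a complex, hence minimal, submanifold, its second fundamental form is controlled by the ambient curvature $C_{1}$, and the Gauss equation gives a sectional curvature bound on $\pi^{-1}(E)$ depending only on $C_{1}$.

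Having a smooth $S^{1}$-invariant K\"ahler metric on $\mathbb{CP}^{1}$ with bounded curvature, bounded diameter, and (using $\int_{E}u\,d\sigma<C_{2}$) bounded $L^{1}$-norm of its symplectic potential, one-dimensional elliptic estimates on $\mathbb{CP}^{1}$ yield a pointwise bound on the tangential derivative of $u|_{E}$ in the interior of $E$, uniform in $C_{1},C_{2},P$. Convexity of $u|_{E}$, inherited from $u$, then propagates the bound to the endpoints of $E$.

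The main obstacle is cleanly isolating the smooth part of $u|_{E}$ from the Guillemin singular contributions of the two adjacent edges sitting at the vertices of $E$. The key tool here is the $L^{\infty}$ bound $u\le C$ from Theorem~\ref{dia}, which controls the vertex values, together with the convexity of $u$ along line segments approaching a vertex inside $E$. This allows one to separate the smooth part from the neighbouring singular terms in a manner uniform in the data, thereby completing the verification of the derivative bound and with it the full $M$-condition.
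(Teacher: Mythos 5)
There is a genuine gap, and it begins with the definition: the $M$-condition is not ``$\int_{\partial P}u\,d\sigma\le M$ together with a tangential derivative bound along each boundary edge.'' As defined in Section 5 (following Donaldson), it requires that for \emph{every} segment $\overline{x_1x_4}$ contained in $P$ --- interior chords, short segments, and segments transverse to the boundary included --- the two middle division points satisfy $|Du\cdot\vec v(x_2)-Du\cdot\vec v(x_3)|<M$. A bound on the tangential derivative of $u$ along the boundary edges, which is all your $\pi^{-1}(E)\cong\mathbb{CP}^1$ argument could produce, does not imply this. The workable reduction, and the one the paper carries out, is to write $u=\tfrac12\sum_i l_i\ln l_i+f$ and prove $|\nabla f|\le C(C_1,C_2,P)$ on all of $\bar P$: the canonical log part satisfies an $M$-condition with $M$ depending only on $P$, and a uniform gradient bound on $f$ supplies the rest. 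The technical heart of that reduction is exactly what your sketch never touches: the derivative of $f$ \emph{transverse} to an edge, i.e.\ $\partial f/\partial x_1(0,x_2)$ along the edge $\{x_1=0\}$. The paper obtains this from the curvature component $u^{12}_{~12}(0,x_2)=-\partial_{x_2}\bigl(2f_{12}/(f_{22}+\tfrac1{2x_2})\bigr)$, integrating once to bound $f_{12}$ on the edge and once more from the vertex; no restriction of the metric to the divisor over $E$ can detect the mixed derivative $f_{12}$.

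Even the purely tangential estimate is asserted rather than proved: ``one-dimensional elliptic estimates on $\mathbb{CP}^1$'' from bounded curvature, bounded diameter and an $L^1$ bound on the potential is not an argument. What actually works (Proposition \ref{grad}, via Lemma 3 of Donaldson's continuity-method paper and the identity $(1/V'')''=u^{11}_{~11}$ on the edge, with $V(x_1)=\tfrac12 x_1\ln x_1+f(x_1,0)$) is an ODE argument: the Guillemin boundary conditions give $(1/V'')(0)=0$ and $(1/V'')'(0)=2$, so integrating $|(1/V'')''|\le C_1$ twice pins down $V''$, hence $f_{11}$, on the definite interval $[0,1/C_1]$, while the hypothesis $\int_{\partial P}u\,d\sigma<C_2$ plus convexity controls $\partial u/\partial x_1$ at distance $\epsilon$ from the vertex; together these bound $f_1$ along the edge. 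Note also that at a vertex the tangential derivative of $u$ itself blows up like $\ln l$, so ``propagating the bound to the endpoints of $E$ by convexity'' cannot be done for $u$, only for the smooth part $f$. Your diameter and minimal-submanifold observations are correct but are not the input these estimates need: the ambient curvature bound is used directly along arbitrary lines in $P$, not through the induced geometry of the boundary divisors.
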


Finally we have

\begin{thm}
\label{convergence}
Let $X$ be a toric surface with K\"ahler class $[\omega]$. Let the Delzant polygon of $X$ be $P$. Suppose $(X,P)$ is relative $K$-stable. Let $\omega_1 \in [\omega]$ be any toric invariant K\"ahler metric. If the Calabi flow starting from $\omega_1$ exists for all time and the Riemannian curvature is uniformly bounded along the flow, then the modified Calabi flow converges exponentially fast to an extremal metric in $[\omega]$.
\end{thm}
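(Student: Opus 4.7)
The strategy combines the uniform curvature and long-time existence hypotheses with Theorems \ref{dia} and \ref{M} to extract compactness, identifies the limit via the decay of the modified Calabi energy, and uses linearization near the extremal limit to obtain the exponential rate. First, I parametrize the toric invariant flow by its normalized symplectic potentials $u_t$ on the Delzant polygon $P$. The modified Mabuchi functional $\mathcal{M}_{\text{ext}}$ is nonincreasing along the modified Calabi flow, so $\mathcal{M}_{\text{ext}}(u_t) \le \mathcal{M}_{\text{ext}}(u_0)$. Analytic relative $K$-stability of $(X,P)$ is, in the Donaldson-style formulation appropriate to toric surfaces, precisely the properness of $\mathcal{M}_{\text{ext}}$ with respect to the linear functional
\[
\mathcal{L}(u) \;=\; \int_{\partial P} u\, d\sigma \;-\; \int_{P} \theta_{\text{ext}}\, u\, dx,
\]
where $\theta_{\text{ext}}$ is the fixed extremal affine function on $P$. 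Combining with the upper bound on $\mathcal{M}_{\text{ext}}(u_t)$ yields $\mathcal{L}(u_t) \le C'$, and since $\int_P \theta_{\text{ext}} u_t\, dx$ is controlled by the normalization of $u_t$, we obtain the uniform bound $\int_{\partial P} u_t\, d\sigma < C_2$ independent of $t$.

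With the curvature bound $|Rm| \le C_1$ and the boundary bound $C_2$ in hand, Theorems \ref{dia} and \ref{M} apply uniformly in $t$: the diameter of $(X,\omega_t)$ and $\max_P u_t$ are bounded, and every $u_t$ satisfies the $M$-condition for a single $M$. I then invoke Donaldson's compactness theorem from \cite{D3}: along any sequence $t_i \to \infty$, a subsequence of $u_{t_i}$ converges smoothly on compact subsets of the interior of $P$ to a limit symplectic potential $u_\infty$ representing a smooth toric K\"ahler metric $\omega_\infty \in [\omega]$; the $M$-condition is precisely what prevents the complex structure from degenerating in the limit. Integrating the identity $\tfrac{d}{dt}\mathcal{M}_{\text{ext}}(u_t) = -\operatorname{Cal}_{\text{ext}}(\omega_t)$ gives $\int_0^\infty \operatorname{Cal}_{\text{ext}}(\omega_t)\, dt < \infty$, so along a suitable subsequence $\operatorname{Cal}_{\text{ext}}(\omega_{t_i}) \to 0$, forcing $\omega_\infty$ to be extremal. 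Uniqueness of toric invariant extremal metrics in $[\omega]$ then pins down the limit independently of the choice of subsequence.

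For the final exponential decay, I would follow the scheme of Tosatti \cite{T1} adapted to the Calabi flow: once $\omega_t$ is sufficiently $C^\infty$-close to $\omega_\infty$, the linearization of the modified Calabi flow is governed by $\mathcal{D}^*\mathcal{D}$, the square of the Lichnerowicz operator at $\omega_\infty$. Restricted to the toric invariant tangent space transverse to holomorphy potentials, this operator has a strictly positive spectral gap, yielding exponential decay in the linearized flow; a standard bootstrap, or equivalently a Lojasiewicz inequality argument exploiting the gradient-flow structure of the Calabi flow, upgrades this to exponential $C^\infty$ convergence of $\omega_t$ to $\omega_\infty$. I expect the main technical obstacle to be precisely this last step: coupling the smooth subsequential compactness to the linearized spectral estimate in order to produce a genuine exponential rate rather than merely subsequential convergence, which requires carefully guaranteeing that the flow enters and remains in a definite $C^\infty$-neighborhood of $\omega_\infty$ after finite time, and ruling out drift along the torus action within the toric invariant class.
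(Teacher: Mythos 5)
Your first two-thirds follows essentially the paper's route: relative $K$-stability (via Proposition \ref{stable} and Donaldson's Propositions 5.1.2 and 5.1.8 in \cite{D1}) gives a uniform bound $\int_{\partial P}\tilde u_t\,d\sigma<C_2$ on the normalized potentials along the flow, and then Theorems \ref{dia} and \ref{M} plus Donaldson's compactness yield subsequential limits which are identified as extremal. Two points are stated more loosely than they deserve: the step ``$\mathcal{M}$ bounded above $\Rightarrow\mathcal{L}(u_t)\le C'$'' needs a lower bound on the entropy term $-\int_P\log\det D^2u\,dx$, which is exactly the content of Donaldson's Proposition 5.1.8 rather than a consequence of properness taken for granted; and smooth convergence of the rescaled flows is not a consequence of the $M$-condition alone --- the paper's Theorem \ref{con} uses the injectivity radius lower bound from the $M$-condition together with the regularity theorem of Streets/Chen--He to get $|\nabla^kRm|$ bounds, and then interior Schauder estimates for Abreu's equation, before invoking the argument of \cite{D3} for the Guillemin boundary conditions. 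Your identification of the limit via $\int_0^\infty\mathrm{Cal}\,dt<\infty$ is a legitimate variant of the paper's argument (which instead uses that the limit flow sits at the infimum of the modified Mabuchi energy).

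The genuine gap is the end-game, which you explicitly defer as an ``expected obstacle'': precisely there the theorem's conclusion is obtained, and the paper supplies two concrete ingredients that your sketch lacks. First, the drift along the torus action is, in symplectic-potential terms, the possible unboundedness of the affine normalizations $l^{(\alpha)}$; the paper kills it with the Calabi--Chen theorem \cite{CC} that the (modified) Calabi flow decreases the $L^2$ distance between symplectic potentials, which bounds $\int_P u^2(t,x)\,dx$ and hence the affine functions, so the unnormalized flow itself subconverges to a fixed extremal potential $u_\infty$ and its distance to $u_\infty$ tends to $0$. Second, the exponential rate is not obtained by redoing a spectral-gap/Lojasiewicz analysis as you propose; the paper verifies the hypotheses of the stability theorem of \cite{HZ} for the Calabi flow near an extremal metric: Donaldson's identity $\|\varphi_i\|_{L^\infty}=\|u(t_i)-u_\infty\|_{L^\infty}$ from \cite{D4} plus the bounded Ricci curvature and Theorem 5.1 of \cite{ChenHe} give uniform $C^{3,\alpha}$ bounds on the K\"ahler potentials $\varphi_i$, and together with the distance decay this places the flow in the basin where \cite{HZ} yields exponential convergence. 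Without an argument of this kind (or a worked-out Lojasiewicz inequality with control of the torus drift), your proposal proves at best subsequential convergence to an extremal metric, not the stated exponential convergence of the modified Calabi flow.
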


\begin{rmk}
In \cite{ZZ}, Zhou-Zhu prove that the analytic relative $K$-stability is a necessary condition for the existence of extremal metrics on a toric surface. Thus we also provide a proof of Conjecture (\ref{c1}).
\end{rmk}

{\bf Acknowledgment: } The author is grateful to the consistent support of Professor Xiuxiong Chen, Pengfei Guan, Vestislav Apostolov and Paul Gauduchon. He is also benefited from the discussions of Si Li, G\'abor Sz\'ekelyhidi and Valentino Tosatti. He would like to thank Jeff Streets for his interests in this work.

\section{Notations and Setup}

\subsection{Toric Geometry}
We review the basic theory of toric varieties from Abreu \cite{A1}, Donaldson \cite{D1} and Guillemin \cite{G1} \cite{G2}.

Let $X$ be a $n$-dimensional toric manifold and $L$ be an ample line bundle over $X$. Suppose $\omega \in c_1(L)$ is a toric invariant K\"aher metric. We get a Delzant polytope $P$ through the moment map. The measure $d x$ on the interior of $P$ is the standard Lebesgue measure. The measure $d \sigma$ on the boundary of $P$ is a constant multiplying the standard Lebesgue measure. The constant is $$\frac{1}{|\vec{n}_i|}$$ on each facet $P_i$ of $P$, where $\vec{n}_i \in \mathbb{Z}^n$ is an inward normal vector of $P_i$. The Delzant conditions for $P$ are: for any vertex $v$, there are exactly $n$ facets $P_{i_1}, \ldots, P_{i_n}$ at $v$ and $(\vec{n}_{i_1}, \ldots, \vec{n}_{i_n})$ is a basis of $\mathbb{Z}^n$.

Suppose $P$ has $d$ facets. Let $l_i(x) = \langle x, n_i \rangle - c_i$ and we choose $c_i$ properly such that $l_i = 0$ on $P_i$. Thus $P$ can be expressed as
$$
P =\{x \in \mathbb{R}^n | l_i(x) \geq 0, i = 1, \ldots, d\}.
$$
The Guillemin boundary conditions tell us that the set of symplectic potentials of all toric invariant metrics expressing in $P$ is
$$
\mathcal{H}_{\mathbb{T}} = \{  u_f | f \in C^\infty(\bar{P}),~  u_f = f + \sum_{i=1}^d \frac{1}{2} l_i \ln l_i ~\mathrm{is~ a ~strictly ~ convex~ function ~on ~} P \}.
$$

The Abreu's equation for the scalar curvature $R$ reads

$$
R = - \sum_{i j} u^{ij}_{~ij}.
$$

\subsection{Calabi Flow and Modified Calabi Flow}

By the $\partial\bar{\partial}$-lemma, any toric invariant K\"ahler metric $\omega_\varphi \in c_1(L)$ can be written as
$$
\omega_\varphi = \omega+ \sqrt{-1} \partial \bar{\partial} \varphi,
$$
where $\varphi \in C^\infty(X)$ is a toric invariant function. The Calabi flow equation is
$$
\frac{\partial \varphi}{\partial t} = R_\varphi - \underline{R},
$$
where $R_\varphi$ is the scalar curvature of $\omega_\varphi$ and $\underline{R}$ is the average of $R_\varphi$. By the proof of the short time existence of \cite{ChenHe}, if we start with a toric invariant K\"ahler metric, then the K\"ahler metric along the Calabi flow are all invariant under the toric action.

If we express the Calabi flow equation in $P$, it reads
$$
\frac{\partial f}{\partial t} = \underline{R} - R_f.
$$

Notice that extremal metrics are not the stationary points under the Calabi flow. To understand the behavior of the Calabi flow around extremal metrics, the modified Calabi flow is introduced in \cite{HZ}.

Let us recall the definition of the extremal vector field from \cite{FM}. In general, let $\omega$ be a K\"ahler metric which is invariant under a maximal compact group $G$ of the reduced automorphism group of $X$. Let $\mathcal{J}$ be the holomorphic structure compatible with $\omega$. A function $f$ is called a killing potential if $\mathcal{J}(\nabla f)$ is a killing vector field of the Riemannian metric $g_\omega$, or equivalently, $\nabla f$ is a real holomorphic vector field. A real holomorphic field $\mathcal{X}$ is called the extremal vector field if $\mathcal{J}(\mathcal{X})$ is a killing vector field of the Riemannian metric $g_\omega$ and the potential of $\mathcal{X}$ is the $L^2_\omega$ projection of the scalar curvature $R_\omega$ to the sets of all killing potentials of killing vector fields corresponding to the Lie algebra of $G$. 

Let $\theta$ be the real function on $X$ satisfying 
$$
L_{\mathcal{X}} \omega = \sqrt{-1} \partial \bar{\partial} \theta, \quad \int_X \theta ~ \omega^n = \int_X R_\omega ~ \omega^n.
$$

Let $\varphi$ be a K\"ahler potential invariant under $G$. We define the modified Calabi flow starting from $\omega_{\varphi} = \omega + \sqrt{-1} \partial \bar{\partial} \varphi$ as:
$$
\frac{\partial \varphi}{\partial t} = R_\varphi - \theta_\varphi,
$$
where $\theta_\varphi = \theta + \mathcal{X}(\varphi)$ . 

When $X$ is a toric variety, the modified Calabi flow equation on $P$ reads
$$
\frac{\partial f}{\partial t} = \theta_P - R_f,
$$
where $\theta_P$ is an affine function such that for any affine function $u$
$$
2 \int_{\partial P} u ~ d \sigma - \int_P u \theta_P ~ dx = 0.
$$
Since in the symplectic side, $\theta_P$ is independent of the symplectic potential $u_f$, we will write $\theta$ instead of $\theta_P$ for convenience. 

It is easy to check that when the Futaki invariant vanishes, the modified Calabi flow coincides with the Calabi flow.

Following \cite{D1}, the modified Mabuchi energy is defined as
$$
\mathcal{M}(f) = - \int_P \log \det (D^2 u_f) ~ dx + \mathcal{L}(u_f),
$$
where
$$
\mathcal{L}(u_f) = 2 \int_{\partial P} u_f ~ d \sigma - \int_P u_f \theta ~ d x.
$$

In fact, the modified Calabi flow is the downward gradient flow of the modified Mabuchi energy by the following calculations. Let $\delta f = h$, then
\begin{eqnarray*}
\delta \mathcal{M} (h) &=& - \int_P u_f^{ij} h_{ij} ~ dx + \mathcal{L}(h) \\
&=& - 2 \int_{\partial P} h ~ dx + \int_P R_f h ~ dx + 2 \int_{\partial P} h ~ d \sigma - \int_P h \theta ~ d x\\
&=& \int_P (R_f-\theta) h ~ dx.
\end{eqnarray*}

\subsection{Relative $K$-Stability}

The algebraic relative $K$-stability in \cite{D1} states as follows:

\begin{defn}
$(X, L)$ is algebraic relative $K$-stable if $\mathcal{L} (u) \geq 0$ for all rational piecewise linear function $u$. The equality holds if and only if $u$ is an affine linear function.
\end{defn}

We define the analytic relative $K$-stability as follows:

\begin{defn}
$(X, P)$ is analytic relative $K$-stable if $\mathcal{L} (u) \geq 0$ for all piecewise linear function $u$. The equality holds if and only if $u$ is an affine linear function.
\end{defn}

In this note, we use analytic relative $K$-stability as our relative $K$-stability. 

Let $x_0$ be an interior point of $P$. A convex function $u$ is normalized (at $x_0$) if 
$$
u(x_0) = 0, \quad D u(x_0) = 0.
$$
where $D$ is the Euclidean derivative. Notice that when we normalize a convex function $u$, $\mathcal{M}(u)$ and $\mathcal{L} (u)$ do not change.

Let $\mathcal{C}_\infty$ be the set of continuous convex functions on $\bar{P}$ which are smooth in the interior of $P$. Let $P^*$ be the union of $P$ and its facets and let $\mathcal{C}_1$ be the set of positive convex function $f$ on $P^*$ such that 
$$
\int_{\partial P} f ~ d \sigma < \infty.
$$

Proposition 5.2.2 in \cite{D1} shows that either there is a positive constant $\lambda > 0$ such that 
$$
\mathcal{L} (f) \geq \lambda \int_{\partial P} f ~ d \sigma
$$
for all normalized functions $f$ in $\mathcal{C}_\infty$ or there is a function $f$ in $\mathcal{C}_1$ which is not an affine function and $$\mathcal{L} (f) \leq 0.$$

Let us assume that $(X, P)$ is relative $K$-stable and $X$ is a toric surface. If there is a function $f \in \mathcal{C}_1$ such that $\mathcal{L}(f) < 0$, then Corollary 4.1 in \cite{WZ} tells us that $(X,P)$ is not relative $K$-stable, a contradiction. If for all functions $f \in \mathcal{C}_1, L(f) \geq 0$ and there is a not affine function $f \in \mathcal{C}_1, L(f) = 0$, then Theorem 4.1 in \cite{WZ} shows that $(X,P)$ is not relative $K$-stable. Contradiction again. Thus we conclude the following proposition:

\begin{prop}
\label{stable}
If $(X, P)$ is relative $K$-stable, then there is a constant $\lambda > 0 $ such that
$$
\mathcal{L} (f) \geq \lambda \int_{\partial P} f ~ d \sigma
$$
for all normalized functions $f$ in $\mathcal{C}_\infty$.
\end{prop}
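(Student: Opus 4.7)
The plan is to proceed by contradiction, starting from the dichotomy provided by Proposition 5.2.2 of \cite{D1}: either the desired inequality $\mathcal{L}(f) \geq \lambda \int_{\partial P} f \, d\sigma$ holds uniformly for some $\lambda > 0$ on all normalized functions in $\mathcal{C}_\infty$, or else there exists a non-affine $f \in \mathcal{C}_1$ with $\mathcal{L}(f) \leq 0$. Assuming the second alternative, I would aim to contradict the analytic relative $K$-stability of $(X, P)$.

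To carry this out I would split into two subcases according to whether $\mathcal{L}(f)$ is strictly negative or vanishes. In the first subcase, with $\mathcal{L}(f) < 0$ for some $f \in \mathcal{C}_1$, I would apply Corollary 4.1 of \cite{WZ}, which in the toric surface setting converts such an $f$ into a piecewise linear test function on which $\mathcal{L}$ is still negative; this immediately violates analytic relative $K$-stability. In the second subcase, $\mathcal{L}(f) = 0$ with $f$ not affine, Theorem 4.1 of \cite{WZ} produces a piecewise linear non-affine function on which $\mathcal{L}$ fails the strict positivity clause required by the definition of analytic relative $K$-stability. Either way we reach a contradiction, so the first alternative of the dichotomy must hold and yields the claimed $\lambda$.

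The main technical obstacle is the passage from the broader class $\mathcal{C}_1$ of merely continuous convex functions with integrable boundary trace down to the piecewise linear test functions that actually appear in the stability definition. One needs an approximation that preserves both the sign of $\mathcal{L}$ and the boundary integral, and this is where the restriction to toric surfaces becomes essential, since the existing tools in \cite{WZ} are two-dimensional in nature. With that machinery accepted as a black box, the remaining work, namely invoking Donaldson's dichotomy and separating the two signs, is essentially formal.
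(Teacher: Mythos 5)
Your proposal is correct and takes essentially the same route as the paper: it invokes Donaldson's dichotomy (Proposition 5.2.2 of \cite{D1}) and then rules out the bad alternative by splitting into the cases $\mathcal{L}(f)<0$ and $\mathcal{L}(f)=0$ with $f$ non-affine, handled by Corollary 4.1 and Theorem 4.1 of \cite{WZ} respectively, contradicting relative $K$-stability. No substantive difference from the paper's argument.
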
 

\section{The Regularity Theorem}

The regularity theorem in Ricci flow is called Shi's estimate \cite{Shi}. Let $M$ be a Riemannian manifold and $g(t), t \in [-1,0]$ be a one parameter Riemannian metric satisfying the Ricci flow equation, i.e.,
$$
\frac{\partial g(t)}{\partial t} = - 2 Ric(t).
$$
Suppose $|Rm(g(t))|$ is bounded by $C_1$ for $t \in [-1,0]$. Then for any integer $k > 0$, there is a constant $C(n, k, C_1)$ depending only on $n, k$ and $C_1$ such that
$$
|\nabla^k Rm(0, x)| < C(n, k, C_1),
$$
for any $x \in M$. The regularity theorem plays an important role in the singularity analysis in Hamilton-Perelman's program, see e.g. \cite{BB}, \cite{CZ}, \cite{KL} and \cite{MT}.

Chen-He \cite{ChenHe2} develop the weak regularity theorem in the singularity analysis of the Calabi flow. Let $X$ be a K\"ahler manifold and $[\omega]$ be a K\"ahler class of $X$. Suppose that the Calabi flow $\omega(t) \in [\omega]$ exists for $t \in [-1, 0]$, the $L^2_{\omega(-1)}$ norm of the bisectional curvature $Rm(-1)$ is bounded by $C_0$ at $t=-1$ and the $L^\infty$ norm of the bisectional curvature $Rm(t)$ is bounded by $C_1$ for $t \in [-1,0]$. Then for any integer $k>0$, there is a constant $C(n,k,C_0, C_1)$ depending only on $n, k, C_0$ and $C_1$ such that
$$
\int_X |\nabla^k Rm(0, x)|^2 ~ \omega_0^2 < C(n,k,C_0,C_1).
$$
Streets also develops similar estimates in \cite{St2}.

In \cite{H1}, the regularity theorem is shown to be one of the obstructions of the long time existence of the Calabi flow on toric varieties. Later, Streets \cite{St} obtains the regularity theorem: suppose the Calabi flow $\omega(t) \in [\omega]$ exists for $t \in [-1, 0]$ and the $L^\infty$ norm of the bisectional curvature $Rm(t)$ is bounded by $C_1$ for $t \in [-1,0]$. Then for any integer $k>0$, there is a constant $C(n,k, C_1)$ depending only on $n, k$ and $C_1$ such that
$$
|\nabla^k Rm(0, x)| < C(n,k, C_1).
$$

Notice that Streets proves the regularity theorem using the sectional curvature and the computations are done in real coordinates. In fact, he needs the following formula for the evolution equation of the sectional curvature:

$$
\frac{\partial Rm}{\partial t} = - \triangle^2 Rm + \nabla^2 Rm * Rm + \nabla Rm * \nabla Rm.
$$

However, the evolution equation of the curvature in \cite{ChenHe2} is written is terms of bisectional curvature and holomorphic coordinates. Thus one needs to redo the calculations. But there is no essential difficulty to go through Streets' calculations.
\section{Diameter Control}

In this section, we prove Theorem (\ref{dia}).

Without loss of generality, we can assume that we work in a standard model, i.e., $O = (0, 0)$ is a vertex of $P$, $x_1, x_2$ are the edges of $P$ around $O$ and $P$ lies in the first quadrant. Guillemin's boundary conditions tell us that 
$$
u= \frac{1}{2} ( x_1 \ln x_1 + x_2 \ln x_2) + f(x_1, x_2),
$$
where $f$ is a smooth function up to the boundary.

Our first observation is the following lemma:

\begin{lem} In edge $x_1$, let $V(x_1) =  \frac{1}{2} x_1 \ln x_1 + f(x_1,0)$, then
$$
\left( \frac{1}{V''} \right)''(x_1) = u^{11}_{~11} (x_1,0).
$$
\end{lem}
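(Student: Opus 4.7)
The plan is a direct computation, exploiting the fact that Guillemin's singular part of $u$ is diagonal and depends only on the normal coordinate to each edge, so that the apparent singularity in $u^{11}$ as $x_2 \to 0$ cancels cleanly.

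First I would write down the Hessian of $u$ explicitly. From the standard-model expression
$$
u = \tfrac{1}{2}(x_1 \ln x_1 + x_2 \ln x_2) + f(x_1,x_2),
$$
one has $u_{11} = \tfrac{1}{2x_1} + f_{11}$, $u_{22} = \tfrac{1}{2x_2} + f_{22}$, $u_{12} = f_{12}$, so
$$
\det(D^2 u) = \Bigl(\tfrac{1}{2x_1} + f_{11}\Bigr)\Bigl(\tfrac{1}{2x_2} + f_{22}\Bigr) - f_{12}^{\,2}.
$$
Hence
$$
u^{11} = \frac{u_{22}}{\det(D^2 u)} = \frac{\tfrac{1}{2x_2} + f_{22}}{\bigl(\tfrac{1}{2x_1} + f_{11}\bigr)\bigl(\tfrac{1}{2x_2} + f_{22}\bigr) - f_{12}^{\,2}}.
$$

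Next I would multiply numerator and denominator by $2 x_2$ to obtain
$$
u^{11} = \frac{1 + 2 x_2 f_{22}}{\bigl(\tfrac{1}{2x_1} + f_{11}\bigr)(1 + 2 x_2 f_{22}) - 2 x_2 f_{12}^{\,2}},
$$
which is now a ratio of functions smooth up to the edge $\{x_2 = 0\}$. Restricting to $x_2 = 0$ gives
$$
u^{11}(x_1, 0) = \frac{1}{\tfrac{1}{2 x_1} + f_{11}(x_1, 0)}.
$$
Since $V(x_1) = \tfrac{1}{2} x_1 \ln x_1 + f(x_1, 0)$, one has $V''(x_1) = \tfrac{1}{2x_1} + f_{11}(x_1,0)$, so
$$
u^{11}(x_1, 0) = \frac{1}{V''(x_1)}.
$$

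Finally, because $\partial_1$ is tangential to the edge $\{x_2 = 0\}$, the operations "restrict to $x_2 = 0$" and "differentiate in $x_1$" commute. Applying $\partial_1^2$ to both sides of the identity above yields
$$
u^{11}_{~11}(x_1,0) = \partial_1^2\bigl(u^{11}(x_1,0)\bigr) = \Bigl(\frac{1}{V''}\Bigr)''(x_1),
$$
which is the claim. The only point requiring care is verifying that the $\tfrac{1}{x_2}$ singularities in $u_{22}$ and $\det(D^2 u)$ cancel so that $u^{11}$ actually extends smoothly to the edge; this is immediate from the rewritten expression above, so there is no real obstacle beyond the bookkeeping.
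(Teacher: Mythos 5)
Your proof is correct, and it reaches the identity by a slightly different route than the paper. The paper differentiates $u^{11}=u_{22}/\det(u_{ij})$ twice in $x_1$ at an interior point, obtaining the four-term expression
$\frac{u_{2211}}{\det}-2\frac{u_{221}\det_1}{\det^2}-\frac{u_{22}\det_{11}}{\det^2}+2\frac{u_{22}(\det_1)^2}{\det^3}$,
and then computes the limit of this expression as the point approaches the edge, recognizing it as $\bigl(-\tfrac{v''}{v^2}+2\tfrac{(v')^2}{v^3}\bigr)=\bigl(\tfrac{1}{v}\bigr)''$ with $v=V''$; the boundary value is thus obtained as a limit of the already-differentiated quantity. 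You instead clear the $\tfrac{1}{2x_2}$ singularity first, exhibiting $u^{11}$ as a ratio of functions smooth up to the edge with $u^{11}(x_1,0)=1/V''(x_1)$, and then use that restriction to $\{x_2=0\}$ commutes with the tangential derivative $\partial_1$. Your version makes the boundary regularity of $u^{11}$ explicit and avoids tracking which of the four singular-looking terms survive in the limit, which is arguably cleaner; the paper's version works directly with the interior expression for $u^{11}_{~11}$, which is the form in which the quantity is used later (as a limit from the interior). The two interpretations of $u^{11}_{~11}(x_1,0)$ agree precisely because of the smooth extension you establish. One small point you should make explicit: the denominator of your rewritten expression equals $V''(x_1)$ at $x_2=0$, and its nonvanishing (hence smoothness of $u^{11}$ up to the interior of the edge) relies on the strict convexity of the restriction of $u$ to the face, which is part of the Guillemin boundary conditions; also the argument is for $x_1$ in the interior of the edge, away from the vertex.
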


\begin{proof}
By direct computations, we have
\begin{eqnarray*}
u^{11}_{~11} & = & \left( \frac{u_{221}}{\det(u_{ij})} -   \frac{u_{22} \det(u_{ij})_1}{(\det(u_{ij}))^2} \right)_1 \\
&=& \frac{u_{2211}}{\det(u_{ij})} -  2 \frac{u_{221} \det(u_{ij})_1}{(\det(u_{ij}))^2} - \frac{u_{22} \det(u_{ij})_{11}}{(\det(u_{ij}))^2 } + 2 \frac{u_{22} (\det(u_{ij})_1)^2}{(\det(u_{ij}))^3}.
\end{eqnarray*}

Let $ v(x_1) = V''(x_1)$. As $x \rightarrow (x_1, 0)$, we get
\begin{eqnarray*}
\lim_{x \rightarrow (x_1, 0)} u^{11}_{~11} (x) & = & \left( - \frac{v''}{v^2} + 2 \frac{v'^2}{v^3} \right) (x_1) \\
& = & \left(- \frac{v'}{v^2} \right)' (x_1) \\
& = & \left( \frac{1}{v} \right) ''(x_1).
\end{eqnarray*}

Hence we obtain the desired result.
\end{proof}

It is shown in \cite{D2} that the norm of Riemannian curvature is expressed as 
$$
|Rm|^2 = \sum u^{ij}_{~kl} u^{kl}_{~ij}.
$$

By direction calculations, we have

\begin{lem} All $u^{ij}_{~kl}(x_1, x_2)$ is finite and
$$
u^{22}_{~11}(x_1, 0) = u^{22}_{~12} (x_1, 0) = u^{12}_{~11} (x_1, 0) = 0
$$
for all $x_1, x_2 \in [0,1]$.
\end{lem}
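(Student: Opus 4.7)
The plan is to work locally near the edge $x_2 = 0$ and read everything off the explicit form of $u$ given by Guillemin's boundary conditions. Since we are near $x_2=0$ (and keeping $x_1$ bounded away from the other corner when necessary), I would absorb the smooth term $\tfrac12 x_1\ln x_1 + f(x_1,x_2)$ into a single smooth function $g(x_1,x_2)$ on $[0,1]^2$, so that
$$
u(x_1,x_2) \;=\; \tfrac{1}{2}\, x_2 \ln x_2 \;+\; g(x_1,x_2).
$$
Then $u_{11}=g_{11}$ and $u_{12}=g_{12}$ are smooth, while $u_{22} = \tfrac{1}{2x_2} + g_{22}$. Consequently
$$
\det(u_{ij}) \;=\; \frac{g_{11}}{2x_2} \;+\; \bigl(g_{11}g_{22}-g_{12}^{\,2}\bigr) \;=\; \frac{1}{2x_2}\Bigl(g_{11} + 2x_2\bigl(g_{11}g_{22}-g_{12}^{\,2}\bigr)\Bigr).
$$

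Next I would invert the Hessian explicitly. Writing $D(x_1,x_2) := g_{11} + 2x_2(g_{11}g_{22}-g_{12}^{\,2})$, which is smooth and strictly positive near $x_2=0$ (by strict convexity of $u$ restricted to the edge), we get
$$
u^{11} \;=\; \frac{u_{22}}{\det(u_{ij})} \;=\; \frac{1+2x_2 g_{22}}{D},\qquad
u^{22} \;=\; \frac{2x_2\, g_{11}}{D},\qquad
u^{12} \;=\; -\frac{2x_2\, g_{12}}{D}.
$$
This already shows that each $u^{ij}$ extends smoothly to the closed half-strip $\{x_2\ge 0\}$ (the singular factor in $u_{22}$ is cancelled by the zero of $1/\det(u_{ij})$), so every derivative $u^{ij}_{~kl}$ is finite on $[0,1]\times[0,1]$; the symmetric argument near the other edge (and the corner) handles the remaining region.

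The three identities then come from reading off the $x_2$-vanishing order of $u^{22}$ and $u^{12}$. Since $u^{22} = 2x_2\cdot g_{11}/D$ and $g_{11}/D\big|_{x_2=0} = g_{11}/g_{11} = 1$, Taylor expanding gives
$$
u^{22}(x_1,x_2) \;=\; 2x_2 \;+\; x_2^2\,h_1(x_1,x_2),\qquad
u^{12}(x_1,x_2) \;=\; x_2\,h_2(x_1,x_2),
$$
for some smooth $h_1,h_2$. Differentiating: $u^{22}_{~11}$ and $u^{22}_{~12}$ involve one $\partial_1$, which kills the universal leading term $2x_2$, and any remaining term still carries a factor of $x_2$ (since $\partial_1\partial_1(x_2^2 h_1)$ and $\partial_1\partial_2(x_2^2 h_1)$ both contain $x_2$); evaluated at $x_2=0$ they vanish. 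Similarly, $u^{12}_{~11} = \partial_{11}(x_2 h_2) = x_2\,\partial_{11}h_2$ vanishes at $x_2=0$.

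There is no real obstacle here; the only thing to watch is carefully tracking which powers of $x_2$ survive after applying $\partial_1$ versus $\partial_2$, and confirming that the expansion of $D^{-1}$ used above is legitimate, which follows because $D|_{x_2=0} = g_{11} = V''(x_1) > 0$ by strict convexity of $V$ along the edge. The corner case reduces to the same computation applied in both coordinates symmetrically, using $u = \tfrac12(x_1\ln x_1 + x_2\ln x_2) + f$ directly.
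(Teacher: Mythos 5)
Your proof is correct and follows essentially the same route as the paper: the paper's terse argument rests on exactly the facts your explicit inversion of the Hessian makes visible, namely that under the Guillemin boundary form $u^{12}$ and $u^{22}$ vanish along the edge with $u^{22} = 2x_2 + O(x_2^2)$ (so $u^{22}_{~2}(x_1,0)=2$), after which the three identities follow by tangential differentiation, and finiteness of all $u^{ij}_{~kl}$ comes from the smooth extension of $u^{ij}$ to the closed region. Your version just carries out the cancellation of the $1/(2x_2)$ singularity against the determinant explicitly, which the paper leaves as ``expressing $u^{ij}_{~kl}$ out.''
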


\begin{proof}
Expressing $u^{ij}_{~kl}(x_1, x_2)$ out, we can see that it is finite. For any $x_1 \in [0,1]$, we have $u^{22}(x_1, 0) = u^{12}(x_1, 0) = 0$. Thus $u^{22}_{~11}(x_1, 0) = u^{12}_{~11}(x_1, 0) = 0$. Also $u^{22}_{~2} (x_1, 0) = 2$ implies $u^{22}_{~12}(x_1,0) = 0$.
\end{proof}

As a corollary, we can simplify the expression of $|Rm|$ in $(x_1, 0)$.

\begin{cor}
$$
|Rm|^2(x_1,0) = ( u^{11}_{~11})^2(x_1,0) + 4 ( u^{12}_{~12})^2(x_1,0) + ( u^{22}_{~22})^2(x_1,0)
$$
\end{cor}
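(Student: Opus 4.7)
The plan is to expand the defining sum $|Rm|^2 = \sum_{i,j,k,l} u^{ij}_{\,kl} u^{kl}_{\,ij}$ term by term at the boundary point $(x_1,0)$ and use the vanishing results from the preceding lemma to kill off most contributions. Because indices run over $\{1,2\}$, the raw sum has $2^4 = 16$ terms, but they are not independent: the inverse Hessian is symmetric, so $u^{ij}_{\,kl}=u^{ji}_{\,kl}$, and partial derivatives commute, so $u^{ij}_{\,kl}=u^{ij}_{\,lk}$. The first step is to record these symmetries explicitly and group the $16$ terms by unordered pair indices $\{i,j\}$ and $\{k,l\}$ so that each independent combination shows up with the correct multiplicity.

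Next, I would substitute the identities from the previous lemma, namely
\[
u^{22}_{\,11}(x_1,0) = u^{22}_{\,12}(x_1,0) = u^{12}_{\,11}(x_1,0) = 0,
\]
together with their automatic symmetric counterparts $u^{22}_{\,21} = 0$, $u^{21}_{\,11} = 0$, $u^{21}_{\,12} = 0$. The surviving contributions are then exactly the diagonal ones: $(u^{11}_{\,11})^2$ from $(i,j)=(k,l)=(1,1)$, $(u^{22}_{\,22})^2$ from $(i,j)=(k,l)=(2,2)$, and the four copies of $(u^{12}_{\,12})^2$ coming from the four choices of $(i,j),(k,l) \in \{(1,2),(2,1)\}$. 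The cross terms like $u^{11}_{\,12}\, u^{12}_{\,11}$ or $u^{11}_{\,22}\, u^{22}_{\,11}$ all vanish because one of the two factors in each product is hit by the lemma.

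Collecting these surviving terms yields
\[
|Rm|^2(x_1,0) = (u^{11}_{\,11})^2(x_1,0) + 4\,(u^{12}_{\,12})^2(x_1,0) + (u^{22}_{\,22})^2(x_1,0),
\]
which is the stated identity. There is no real obstacle here: the proof is purely a bookkeeping exercise once one is careful about the symmetry factor of $4$ for the middle term, since it picks up contributions from four ordered-index configurations rather than one. The only place to double-check is that the cross terms paired with the vanishing components indeed come paired (rather than squared), so that the vanishing of a single factor is enough to eliminate them.
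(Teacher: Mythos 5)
Your proposal is correct and follows essentially the same route as the paper: expand the sixteen-term sum $\sum_{i,j,k,l} u^{ij}_{~kl} u^{kl}_{~ij}$ at $(x_1,0)$, use the symmetries in $(i,j)$ and $(k,l)$ together with the vanishing (and finiteness) of $u^{22}_{~11}$, $u^{22}_{~12}$, $u^{12}_{~11}$ from the preceding lemma to kill every cross term, and keep the multiplicity four for $(u^{12}_{~12})^2$. The paper phrases the bookkeeping by index patterns (``three equal indices'' and ``two $1$'s with $i=j$'') rather than term-by-term, but the argument is the same.
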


\begin{proof}
Notice that
$$
|Rm|^2(x_1,0) = \sum_{i,j,k,l} u^{ij}_{~kl} u^{kl}_{~ij} (x_1,0).
$$
For each $u^{ij}_{~kl}$, if there are three $1$ or $2$ in $i,j,k,l$, then by the above lemma, we have $u^{ij}_{~kl} u^{kl}_{~ij}(x_1,0) = 0$. If there are exactly two $1$ in $i,j,k,l$ and $i=j$, then we also have $u^{ij}_{~kl} u^{kl}_{~ij} (x_1,0) = 0$. Thus we obtain the conclusion.
\end{proof}

The integral bound of $u$ on the boundary shows that: for every $\epsilon > 0$, there exists a constant $C > 0$ depending only on $\epsilon, C_2$ and $P$ such that for every point $x = (x_1, 0),~ \epsilon \leq x_1 \leq 1$, we have
$$
u(x) < C, \quad  \left|\frac{\partial u}{\partial x_1}\right|(x) < C.
$$

Together with the above lemma, we have

\begin{prop}
\label{grad}
There exists a constant $C_3 > 0$ depending only on $ C_1, C_2$ and $P$ such that
$$
u(0,0) < C_3, \quad |\nabla f|(0,0) < C_3.
$$
\end{prop}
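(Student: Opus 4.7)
The plan is to leverage the curvature bound through the identity $(1/V'')'' = u^{11}_{~11}$ from the preceding lemma, exploiting the fact that Guillemin's boundary form of $u$ pins down the first two terms in the Taylor expansion of $w := 1/V''$ at the vertex $x_1=0$ independently of which admissible $u$ is chosen.

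First I set $w(x_1) = 1/V''(x_1) = 2x_1/(1 + 2 x_1 f_{11}(x_1, 0))$; since $f$ is smooth up to the boundary, this yields the universal values $w(0) = 0$ and $w'(0) = 2$. The preceding lemma and corollary give
$$|w''(x_1)| = |u^{11}_{~11}(x_1, 0)| \leq |Rm|(x_1, 0) \leq C_1$$
on the edge, so Taylor's theorem at $0$ produces $|w(x_1) - 2 x_1| \leq \frac{C_1}{2} x_1^2$. In particular $w(x_1) \geq \frac{3}{2} x_1$ on $[0, 1/C_1]$, and hence
$$|f_{11}(x_1, 0)| = \frac{|2 x_1 - w(x_1)|}{2 x_1 w(x_1)} \leq \frac{C_1}{6}$$
on this interval.

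Next I pick $\epsilon_0 = \min(1/C_1, L/2)$, where $L$ is the length of the edge. The pointwise estimate displayed just above the proposition controls $V(\epsilon_0) = u(\epsilon_0, 0)$ and $V'(\epsilon_0) = (\partial u/\partial x_1)(\epsilon_0, 0)$ by a constant depending only on $C_1, C_2, P$; subtracting the explicit Guillemin parts $\frac{1}{2} \epsilon_0 \ln \epsilon_0$ and $\frac{1}{2}(\ln \epsilon_0 + 1)$ then controls $f(\epsilon_0, 0)$ and $f_1(\epsilon_0, 0)$. Integrating the $f_{11}$-estimate from $\epsilon_0$ down to $0$ propagates these into a bound on $f_1(0, 0)$, and integrating $f_1$ once more gives the bound on $f(0, 0) = u(0, 0)$. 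Running the same argument along the other edge (with $u^{22}_{~22}$ replacing $u^{11}_{~11}$) bounds $f_2(0, 0)$, which combined with the bound on $f_1(0, 0)$ controls $|\nabla f|(0, 0)$.

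The delicate step is the anchoring at the vertex: a pointwise bound on $w''$ alone is useless without a reference value, and the Guillemin boundary condition is precisely what supplies $(w(0), w'(0)) = (0, 2)$ universally. Once $w$ is pinned there, the $C_1$-bound on $w''$ sharply controls $w$ on $[0, 1/C_1]$ and therefore $f_{11}$, after which the remaining estimates reduce to one-dimensional integration using the ambient integral-boundary estimate at the interior point $\epsilon_0$.
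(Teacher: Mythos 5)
Your proof is correct and follows essentially the same route as the paper: it anchors $w=1/V''$ at the vertex via the Guillemin values $w(0)=0$, $w'(0)=2$, uses the curvature bound on $w''=u^{11}_{~11}$ to control $w$ (hence $f_{11}$) on $[0,1/C_1]$, and then integrates back from the interior reference point where the boundary-integral estimate applies. The only differences are cosmetic — you make the constant $C_1/6$ and the final integrations explicit where the paper leaves them as "easy to see."
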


\begin{proof}
Let $V(x_1) = \frac{1}{2} x_1 \ln x_1 + f(x_1, 0)$. Without loss of generality, we only need to show that there exists a constant $C_3 > 0$ depending only on $C_1, C_2$ and $P$ such that
$$
V(0) < C_3,\quad \left| \frac{\partial f}{\partial x_1} \right| (0) < C_3.
$$

It is easy to see that 
$$
\left( \frac{1}{V''} \right)' (0) = 2.
$$

Let us pick $\epsilon = \frac{1}{C_1}$, then for any $s \in (0, \epsilon]$, we have
$$
\left| \left( \frac{1}{V''} \right)' (s) - 2 \right| = \left| \int_0^s \left( \frac{1}{V''} \right)'' (x) \ dx \right| \leq C_1 s.
$$
Hence
$$
2 - C_1 s \leq  \left( \frac{1}{V''} \right)' (s) \leq  2+ C_1 s.
$$
Since 
$$
\frac{1}{V''} (0) = 0,
$$
we have
$$
2s - \frac{C_1}{2}s^2 \leq \frac{1}{V''}(s) \leq 2s + \frac{C_1}{2}s^2.
$$

In terms of $f(x_1,0)$, we have
$$
-\frac{C_1}{8+2 C_1 s} \leq \frac{\partial^2 f}{\partial x_1^2} (s, 0)  \leq \frac{C_1}{8-2 C_1 s}.
$$

Since $|\frac{\partial f}{\partial x_1}(\epsilon,0)| < C$, we conclude that $|\frac{\partial f}{\partial x_1}(s,0)| < C$ for all $s \in [0, \epsilon]$. It is also easy to control the $C^0$ norm of $f$ at $(0,0)$.
\end{proof}

Next we want to show that the diameter of $(X, u)$ is bounded by a constant $C$ depending only on $C_1, C_2$ and $P$. Without loss of generality, we only need to show that for any point $ x = (x_1, x_2)$ with $x_1, x_2 \geq 0, ~ x_1 + x_2 \leq 1$, the Riemannian distance $d_u (O, x) < C$. 

Let the vector $\vec{v} = \langle a, b \rangle, ~ a, b \geq 0, ~ a + b =1$ be a vector pointing from $O$ to $x$. We can parametrize the line interval from $O$ to $x$ as following:

$$
x_1(t) = a t, \quad x_2(t) = b t, \quad t \in [0,1].
$$

Let $V(t)$ be the restriction of $u$ on the above line interval. Then
$$
V(t) = \frac{1}{2} ( at \ln (at)  + bt \ln (bt) ) + f(at, bt) = \frac{1}{2} t \ln t + g(t),
$$ 
where $g(t) = \frac{1}{2} (at \ln a + bt \ln b) + f(at, bt)$.

Notice that since  $$|\nabla f \cdot \vec{v}|(0) < C,$$ we have $|g'(0)| < C$. In fact, we can prove the following lemma.

\begin{lem}
There is a constant $C > 0$ depending only on $C_1, C_2$ and $P$ such that for every $t \in [0,1]$, 
$$
|g'(t)| < C.
$$
\end{lem}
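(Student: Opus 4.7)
The plan is to reproduce, along the interior line $(at, bt)$, the argument used for an edge in Proposition \ref{grad}: establish Taylor data for $1/V''$ at $t = 0$, bound $\bigl(\frac{1}{V''}\bigr)''$ in terms of the curvature, integrate twice to bound $g''$, and once more to bound $g'$. With $V(t) = u(at, bt) = \frac{t}{2}\ln t + g(t)$, one has $V''(t) = \frac{1}{2t} + g''(t)$ where $g''(t) = a^2 f_{11} + 2ab f_{12} + b^2 f_{22}$ evaluated at $(at, bt)$; expanding $\frac{1}{V''(t)} = \frac{2t}{1 + 2t\, g''(t)}$ yields $\frac{1}{V''}(0) = 0$ and $\bigl(\frac{1}{V''}\bigr)'(0) = 2$, in exact analogy with the vertex behavior used in the proof of Proposition \ref{grad}.

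The main step is to bound $\bigl(\frac{1}{V''}\bigr)''(t)$ by a constant multiple of $|Rm|(at, bt) \leq C_1$. In the edge case $(a,b) = (1, 0)$ this was the clean identity $\bigl(\frac{1}{V''}\bigr)''(x_1) = u^{11}_{~11}(x_1, 0)$, obtained by taking the limit $u^{11}(x_1, x_2) \to 1/u_{11}(x_1, 0)$ as $x_2 \to 0$. For a generic direction the corresponding identity fails, since $v^T u^{-1} v \neq 1/(v^T u_{\mathrm{hess}} v)$ unless $v$ is an eigenvector of the Hessian. Instead one computes $\bigl(\frac{1}{V''}\bigr)''$ directly from $V'', V''', V^{(4)}$ and regroups, using the identities $u^{ij}_{~k} = -u^{ia} u^{jb} u_{abk}$ and their derivatives, into an expression in the entries $u^{ij}_{~kl}(at, bt)$, each of which is pointwise controlled by $|Rm|^2 = \sum u^{ij}_{~kl} u^{kl}_{~ij} \leq C_1^2$.

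Granted the bound $\bigl|\bigl(\frac{1}{V''}\bigr)''(t)\bigr| \leq C\, C_1$, integrating twice from $t = 0$ gives
\[
\Bigl|\tfrac{1}{V''}(s) - 2s\Bigr| \leq \tfrac{C C_1}{2}\, s^2
\]
for $s$ in a small interval $[0, \epsilon]$ with $\epsilon$ depending only on $C_1$; rearranging yields $\bigl|V''(s) - \tfrac{1}{2s}\bigr| \leq C'$, that is $|g''(s)| \leq C'$, on $[0, \epsilon]$. For $s \in [\epsilon, 1]$ the point $(as, bs)$ is bounded away from the vertex $O$, and there the curvature bound together with the boundary integral bound on $u$ furnishes standard interior $C^2$ control on $f$, hence on $g''$. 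Integrating $g''$ once and invoking the bound $|g'(0)| < C_3$ from Proposition \ref{grad} then yields $|g'(t)| \leq C$ uniformly on $[0, 1]$, with a constant depending only on $C_1$, $C_2$ and $P$.

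The main obstacle is the second step: the interior-direction analogue of the edge identity. The shortcut used for an edge, where three of the four indices in $u^{ij}_{~kl}$ are forced to be $1$ at the boundary and many terms collapse, is unavailable for a generic $(a,b)$. One therefore has to handle the full fourth-derivative expansion of $1/V''$ and carefully isolate the directional combinations of $u^{ij}_{~kl}$, verifying that what remains is pointwise controlled by the Riemannian curvature in a way uniform over $a + b = 1$, $a,b \geq 0$.
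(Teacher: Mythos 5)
Your skeleton (Taylor data for $1/V''$ at $t=0$, a curvature bound on $\bigl(\frac{1}{V''}\bigr)''$, then integration) matches the paper's, but the step you yourself flag as the ``main obstacle'' is a genuine gap, and the route you sketch for filling it does not work. The paper does not prove, and does not need, a two-sided bound on $\bigl(\frac{1}{V''}\bigr)''$ along a general ray: it invokes Lemma 3 of \cite{D3}, which gives only the one-sided inequality $\bigl(\frac{1}{V''}\bigr)''(t) \leq |Rm|(x(t))$ along an arbitrary segment (the proof is a barrier argument based on the Cauchy--Schwarz inequality $w^T(D^2u)^{-1}w \geq (v^Tw)^2/(v^T(D^2u)v)$, touching equality at one point; this produces a sign on a second derivative and nothing in the other direction). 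Your justification for a two-sided version --- that each coordinate component $u^{ij}_{~kl}$ is pointwise controlled by $|Rm|$ --- is incorrect: the curvature bound controls contractions of $u^{ij}_{~kl}$ against vectors normalized with respect to $u_{ij}$ and $u^{ij}$, and near $\partial P$ these normalizations degenerate relative to the Euclidean frame, so individual Euclidean components are not bounded by $|Rm|$ (note also that $\sum u^{ij}_{~kl}u^{kl}_{~ij}$ is not a sum of squares of those components).

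Because only the one-sided bound is available, your integration scheme does not close as written: you would obtain only $g''(t) \geq -\frac{C_1}{8+2C_1 t}$ on $[0,\epsilon]$, hence, from $|g'(0)|<C_3$, only a lower bound on $g'$. The paper compensates for the one-sidedness by additionally controlling $g'(\epsilon)$ directly from the convexity of $V$ together with the bounds $V(0), V(1)\geq 0$ and $V(0), V(1)\leq C(C_1,C_2,P)$; a lower bound on $g''$ on $[0,\epsilon]$ combined with control of $g'$ at \emph{both} endpoints $0$ and $\epsilon$ then yields two-sided control of $g'$ throughout, and the same convexity argument disposes of $t\in[\epsilon,1]$. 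Your alternative for $t\in[\epsilon,1]$, ``standard interior $C^2$ control,'' is also unjustified: when $a$ or $b$ is small the point $(at,bt)$ is arbitrarily close to an edge of $P$ even for $t\geq\epsilon$, so no interior estimate applies there.
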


\begin{proof}
Pick a small constant $\epsilon > 0$, we only need to control $g'(t)$ for $t \in [0, \epsilon]$. Since $V(t)$ is a convex function and $V(0), V(1) \geq 0, ~ V(0), V(1) \leq C(C_1, C_2, P)$, we can control $g'(\epsilon)$ by a constant depending only on $\epsilon, C_1, C_2$ and $P$.

By Lemma 3 in \cite{D3}, we have
$$
\left( \frac{1}{V''} \right) '' (t) \leq |Rm|(x(t), y(t)).
$$

The arguments in Proposition (\ref{grad}) show that 

$$
g''(t) > -\frac{C_1}{8+2 C_1 t}.
$$

Together with the fact $g'(0), g'(\epsilon)$ are bounded, we conclude that $g'(t)$ is bounded by a constant depending only on $C_1, C_2$ and $P$.

\end{proof}

As a result, we have
 
\begin{cor}
\label{dis}
There exists a constant $C > 0$ depending only on $C_1, C_2$ and $P$ such that
$$d_u (O, x) < C(C_1, C_2, P).$$
\end{cor}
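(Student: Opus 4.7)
The plan is to dominate $d_u(O, x)$ by the length of an explicit curve in $X$ and then estimate that length analytically. Lift the affine segment $\gamma(t) = (at, bt) \subset P$ to a curve in $X$ by fixing the angular coordinate. Since the toric K\"ahler metric has the form $u_{ij}\,dx^i\,dx^j + u^{ij}\,d\theta^i\,d\theta^j$ in symplectic coordinates, the speed of this lifted curve is exactly $\sqrt{u_{ij}\dot\gamma^i\dot\gamma^j} = \sqrt{V''(t)}$, and hence
$$d_u(O, x) \;\leq\; \int_0^1 \sqrt{V''(t)}\,dt.$$

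The key analytic move is a weighted Cauchy--Schwarz inequality, with weight $\sqrt{t}$ chosen to absorb the $1/(2t)$ singularity of $V''$ inherited from the Guillemin boundary term:
$$\left(\int_0^1 \sqrt{V''(t)}\,dt\right)^2 \;\leq\; \left(\int_0^1 V''(t)\sqrt{t}\,dt\right)\left(\int_0^1 \frac{dt}{\sqrt{t}}\right) \;=\; 2\int_0^1 V''(t)\sqrt{t}\,dt.$$
I would treat the remaining integral by integration by parts:
$$\int_0^1 V''(t)\sqrt{t}\,dt \;=\; V'(1) \;-\; \lim_{t \to 0^+} V'(t)\sqrt{t} \;-\; \frac{1}{2}\int_0^1 \frac{V'(t)}{\sqrt{t}}\,dt.$$
The limit at $t=0$ vanishes since $V'(t) = \tfrac{1}{2}\ln t + \tfrac{1}{2} + g'(t)$ and each summand multiplied by $\sqrt{t}$ tends to $0$; the term $V'(1) = \tfrac{1}{2} + g'(1)$ is bounded by the preceding lemma; and the splitting $V'(t)/\sqrt{t} = \tfrac{\ln t}{2\sqrt{t}} + \tfrac{1}{2\sqrt{t}} + \tfrac{g'(t)}{\sqrt{t}}$ shows the last integral is bounded as well, with the first two pieces contributing the explicit constants $-2$ and $1$ and the third being dominated in absolute value by $2C(C_1, C_2, P)$ via the uniform bound $|g'| \leq C$.

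The main technical point to justify is the legitimacy of the improper integration by parts, i.e.\ the convergence of $\int_0^1 V''(t)\sqrt{t}\,dt$ as an improper integral. This follows from the pointwise estimate $V''(t) \leq 1/(2t - \tfrac{C_1}{2}t^2)$ near the vertex, which is furnished by the curvature hypothesis exactly as in Proposition~\ref{grad} and forces $V''(t)\sqrt{t} = O(t^{-1/2})$ as $t \to 0^+$. Crucially, no pointwise bound on $V''$ in the bulk of $[0,1]$ is required, because the integration by parts trades $\int V''\sqrt{t}$ for the endpoint values of $V'$ and an absolutely convergent integral of $V'/\sqrt{t}$, both of which are already controlled by the preceding lemma.
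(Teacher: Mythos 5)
Your argument is correct and reaches the same conclusion, but by a continuous version of the paper's discrete argument. The paper splits $[0,1]$ into dyadic intervals $[2^{-(i+1)},2^{-i}]$, applies Cauchy--Schwarz on each piece, and evaluates $\int_{2^{-(i+1)}}^{2^{-i}} V''\,dt = \tfrac{\ln 2}{2} + g'(2^{-i}) - g'(2^{-(i+1)})$, so that each dyadic block contributes $O(2^{-i/2})$ and the resulting series converges geometrically. You instead apply a single weighted Cauchy--Schwarz with weight $\sqrt{t}$ and then integrate by parts, reducing everything to $V'(1)$, a vanishing boundary term at $t=0$, and the absolutely convergent integral $\int_0^1 V'(t)\,t^{-1/2}\,dt$. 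Both arguments rest on exactly the same input --- the preceding lemma's uniform bound $|g'|\le C(C_1,C_2,P)$ on $[0,1]$ together with the explicit form $V(t)=\tfrac12 t\ln t + g(t)$ --- and neither genuinely requires a pointwise bound on $V''$ away from the vertex. Your version has the merit of producing an explicit closed-form constant; the dyadic version is the natural discretization and adapts more directly when one only controls increments of $V'$ at comparable scales.

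One small caveat: the pointwise upper bound $V''(t)\le 1/(2t-\tfrac{C_1}{2}t^2)$ that you invoke to legitimize the improper integration by parts is not available ``exactly as in Proposition~\ref{grad}'' along an interior ray. Along an edge one has $(1/V'')'' = u^{11}_{~11}$ exactly, so the curvature bound is two-sided; along an interior segment the paper only quotes the one-sided inequality $(1/V'')''\le |Rm|$ from Lemma 3 of \cite{D3}, which yields a lower bound on $V''$ near the vertex, not an upper bound. Fortunately you do not need it: since $V''\ge 0$, monotone convergence applied to $\int_\epsilon^1 V''\sqrt{t}\,dt$, whose integrated-by-parts expression converges as $\epsilon\to 0^+$ (because $V'(\epsilon)\sqrt{\epsilon}\to 0$ and $V'/\sqrt{t}\in L^1(0,1)$ by the boundedness of $g'$), already gives both the finiteness of $\int_0^1 V''\sqrt{t}\,dt$ and the validity of the integration by parts. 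With that substitution the proof is complete.
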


\begin{proof}
We split the interval $ [0,1]$ into infinite many intervals as $$\left[\frac{1}{2}, 1\right],\left[\frac{1}{4}, \frac{1}{2}\right], \ldots, \left[\frac{1}{2^n}, \frac{1}{2^{n-1}}\right], \ldots$$. 
Then
\begin{eqnarray*}
d_u(O,x) &\leq& \int_0^1 \sqrt{V''(t)} ~ dt \\
&=& \lim_{N \rightarrow \infty} \sum_{i=0}^N \int_{\frac{1}{2^{i+1}}}^{\frac{1}{2^i}} \sqrt{V''(t)} ~ dt \\
&\leq&  \lim_{N \rightarrow \infty} \sum_{i=0}^N \sqrt{ \frac{1}{2^{i+1}}\int_{\frac{1}{2^{i+1}}}^{\frac{1}{2^i}} V''(t) ~dt} \\
&=& \lim_{N \rightarrow \infty} \sum_{i=0}^N \sqrt{ \frac{1}{2^{i+1}} \left(\frac{\ln 2}{2} + g'\left(\frac{1}{2^{i}}\right) - g'\left(\frac{1}{2^{i+1}}\right ) \right) ~dt} \\
&\leq& C(C_1, C_2, P) \sum_{i=0}^\infty \frac{1}{(\sqrt{2})^{i+1}} \\
&\leq& C(C_1, C_2, P) 
\end{eqnarray*}

\end{proof}

\begin{proof}[Proof of Theorem (\ref{dia})] By Proposition (\ref{grad}) and Corollary (\ref{dis}), we obtain the result.

\end{proof}

\section{$M$-condition}

We recall the definition of $M$-condition from \cite{D3}. Let $\overline{x_1x_4}$ be any segment of $P$ and $x_2, x_3$ be two points in $\overline{x_1x_4}$ such that $|\overline{x_1x_2}| = |\overline{x_2x_3}| = |\overline{x_3x_4}|$. Let $\vec{v}$ be an unit vector pointing from $x_1$ to $x_4$. $u$ satisfies  the $M$-condition on $\overline{x_1x_4}$ if 
$$
\left| D u \cdot \vec{v}(x_2) - D u \cdot \vec{v}(x_3) \frac{}{}\right| < M.
$$

\begin{defn}
$u$ satisfies the $M$-condition on $P$ if for any segment $l \subset P$, $u$ satisfies the $M$-condition on $l$.
\end{defn}

Let us write $u(x_1, x_2) = \frac{1}{2} (x_1 \ln x_1 + x_2 \ln x_2) + f(x_1, x_2)$. Proposition (\ref{grad}) shows that 
$$
\left| \frac{\partial f}{\partial x_1}(x_1, 0) \right|,~ \left| \frac{\partial f}{\partial x_2}(0, x_2) \right| < C(C_1, C_2, P),
$$
for any $x_1, x_2 \in [0,1]$. Our next few lemmas show that we can also control $| \frac{\partial f}{\partial x_1}(0, x_2) |$.

\begin{lem}
$$
u^{12}_{~12}(0, x_2) = - \frac{\partial \frac{2 f_{12}}{f_{22} + \frac{1}{2 x_2}}}{\partial x_2} (0, x_2),
$$
where $x_2 \in (0,1]$.
\end{lem}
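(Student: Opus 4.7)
The plan is to compute $u^{12}$ explicitly using the decomposition $u = \frac{1}{2}(x_1 \ln x_1 + x_2 \ln x_2) + f$, find its leading behavior in $x_1$ along the edge $\{x_1 = 0\}$, and then differentiate in $x_2$.

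First I would record the Hessian entries $u_{11} = \frac{1}{2 x_1} + f_{11}$, $u_{22} = \frac{1}{2 x_2} + f_{22}$, $u_{12} = f_{12}$, so that
\[
u^{12} = -\frac{f_{12}}{\det(u_{ij})}, \qquad \det(u_{ij}) = \Bigl(\tfrac{1}{2x_1} + f_{11}\Bigr)\Bigl(\tfrac{1}{2x_2} + f_{22}\Bigr) - f_{12}^2.
\]
The key algebraic trick is to multiply numerator and denominator by $2x_1$:
\[
u^{12}(x_1,x_2) = -\frac{2 x_1 f_{12}}{(1 + 2 x_1 f_{11})\bigl(\tfrac{1}{2x_2} + f_{22}\bigr) - 2 x_1 f_{12}^2}.
\]
In this form the denominator is a smooth function of $(x_1, x_2)$ up to the edge $x_1 = 0$, where it takes the positive value $\frac{1}{2x_2} + f_{22}(0,x_2)$. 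Hence $u^{12}$ is smooth across this edge and vanishes there, confirming the observation used earlier in the note.

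Next I would differentiate this formula once in $x_1$ and set $x_1 = 0$. Only the derivative of the factor $2 x_1$ in the numerator survives at $x_1 = 0$, giving
\[
u^{12}_{~1}(0,x_2) = -\frac{2 f_{12}(0,x_2)}{f_{22}(0,x_2) + \tfrac{1}{2x_2}}.
\]
Since $u^{12}$ is smooth up to the edge, the partial derivative $u^{12}_{~12}(0,x_2)$ is simply $\partial_{x_2}$ of the function of $x_2$ displayed above, which yields the desired formula
\[
u^{12}_{~12}(0,x_2) = -\frac{\partial}{\partial x_2}\!\left( \frac{2 f_{12}}{f_{22} + \tfrac{1}{2x_2}} \right)(0,x_2).
\]

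There is no substantive obstacle; the only subtle point is making sure that one treats the $x_1 \to 0$ limit correctly, where $\det(u_{ij})$ blows up like $\frac{1}{2x_1}(\frac{1}{2x_2} + f_{22})$, and that swapping $\partial_{x_2}$ with evaluation at $x_1 = 0$ is justified. Both are handled cleanly by first rewriting $u^{12}$ as a smooth function of $(x_1, x_2)$ via the $2 x_1$ trick, after which the remaining computations are a direct application of the product rule.
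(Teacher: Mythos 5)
Your proposal is correct and follows essentially the same route as the paper: both write $u^{12} = -f_{12}/\det(u_{ij})$, absorb the $\frac{1}{2x_1}$ singularity by the $2x_1$ multiplication so that $\partial_1 u^{12}$ at $x_1=0$ becomes $-2f_{12}/(f_{22}+\frac{1}{2x_2})$, and then differentiate in $x_2$. The paper merely compresses this into a single limit of the mixed second derivative, whereas you make the intermediate step and the justification for passing to the edge explicit.
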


\begin{proof}
This is done by direct calculations.

\begin{eqnarray*}
u^{12}_{~12}(0, x_2) & = &- \lim_{x_1 \rightarrow 0} \frac{\partial^2 \frac{f_{12}}{u_{11}u_{22} - f_{12}^2}}{\partial x_1 \partial x_2}(x_1, x_2)\\
& = &- \lim_{x_1 \rightarrow 0} \frac{\partial \frac{2 f_{12}}{f_{22} + \frac{1}{2 x_2}}}{\partial x_2} (x_1, x_2)\\
& = & - \frac{\partial \frac{2 f_{12}}{f_{22} + \frac{1}{2 x_2}}}{\partial x_2} (0, x_2)
\end{eqnarray*}
\end{proof}

\begin{prop}
There exists a constant $C$ depending only on $C_1, C_2$ and $P$ such that for any $x_2 \in (0,1]$, we have
$$
\left|\frac{\partial f}{\partial x_1} (0, x_2) \right| < C.
$$
\end{prop}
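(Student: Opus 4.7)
The strategy is to anchor at the vertex $(0,0)$, where Proposition \ref{grad} provides $|\partial_{x_1} f(0,0)| < C_3$, and then propagate this bound along the edge $\{x_1 = 0\}$ by controlling $\int_0^{x_2} f_{12}(0,s)\, ds$. The control will come from the curvature bound, channelled through the identity for $u^{12}_{~12}(0, x_2)$ just established in the preceding lemma.

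By the same symmetry argument as in the Corollary of the previous section, now applied on the edge $\{x_1 = 0\}$, one has
$$|Rm|^2(0,x_2) = (u^{11}_{~11})^2 + 4(u^{12}_{~12})^2 + (u^{22}_{~22})^2,$$
so $|u^{12}_{~12}(0,x_2)| \leq C_1/2$. Rewrite the preceding lemma in the equivalent form
$$\frac{\partial}{\partial x_2}\!\left(\frac{4 x_2\, f_{12}(0,x_2)}{1 + 2 x_2\, f_{22}(0,x_2)}\right) = -u^{12}_{~12}(0,x_2).$$
Since $f \in C^\infty(\bar P)$, the numerator is $O(x_2)$ and the denominator tends to $1$ as $x_2 \to 0^+$, so the parenthesized quantity vanishes at the vertex. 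Integrating on $[0, x_2]$, using the curvature bound, and invoking the convexity identity $1 + 2 x_2 f_{22}(0,x_2) = 2 x_2\, u_{22}(0,x_2) \geq 0$ yields the pointwise estimate
$$|f_{12}(0,x_2)| \leq \tfrac{C_1}{8}\bigl(1 + 2 x_2\, f_{22}(0,x_2)\bigr).$$

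To conclude, I would write $\partial_{x_1} f(0, x_2) = \partial_{x_1} f(0, 0) + \int_0^{x_2} f_{12}(0,s)\, ds$; the boundary term is controlled by Proposition \ref{grad}. Integrating the previous pointwise estimate and then applying integration by parts converts the remaining task to controlling
$$\int_0^{x_2} s\, f_{22}(0,s)\, ds = x_2 f_2(0,x_2) - \int_0^{x_2} f_2(0,s)\, ds,$$
so everything reduces to a uniform bound on $|f_2(0, \cdot)|$ along the edge. Such a bound is assembled by combining the boundary integral hypothesis, which controls $|f_2(0, s)|$ on compact subsets of the edge's interior, with Proposition \ref{grad} at each of the two endpoints of the edge (the far endpoint handled after passing to local Delzant coordinates at that vertex).

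The main obstacle is this last patching step: one must fuse the interior bound on $f_2$ with the two vertex bounds into a single constant depending only on $C_1, C_2, P$, and simultaneously verify that the qualitative appeal to $f \in C^\infty(\bar P)$ (used to kill the boundary term at $x_2 = 0$) does not inject any unquantified quantity into the final estimate.
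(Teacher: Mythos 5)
Your argument is correct, and its first half coincides with the paper's: the identity from the preceding lemma, the bound $|u^{12}_{~12}(0,x_2)|\le C_1$ obtained from the edge expression of $|Rm|^2$ (applied by symmetry on the edge $\{x_1=0\}$), the vanishing of $\frac{4x_2 f_{12}}{1+2x_2 f_{22}}$ at the vertex, and the integrated estimate $\left|\frac{4x_2 f_{12}}{1+2x_2 f_{22}}\right|\le C x_2$. Where you part ways is in converting this into control of $\int_0^{x_2} f_{12}(0,s)\,ds$. The paper extracts a \emph{pointwise} bound $|f_{12}(0,x_2)|<C$ by bounding the denominator $u_{22}(0,x_2)=f_{22}+\frac{1}{2x_2}$ from above --- near the vertex via Proposition \ref{grad} (which bounds $f_{22}$ there), and away from the vertex via Lemma 4 of \cite{D3}. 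You instead keep the inequality $|f_{12}|\le\frac{C_1}{8}\left(1+2x_2 f_{22}\right)$, integrate it, and integrate by parts to reduce everything to the tangential derivative $f_2(0,\cdot)$, which is already controlled on $[0,1]$ (the paper records exactly this just before the lemma, as a consequence of Proposition \ref{grad} at the vertices and the boundary integral hypothesis in the interior of the edge). Your route buys independence from Lemma 4 of \cite{D3}, at the price of yielding only an integrated rather than pointwise bound on $f_{12}(0,\cdot)$ --- sufficient for the proposition as stated, though the pointwise bound is what the paper actually proves and records. The two issues you flag are not genuine gaps: the patching of the $f_2(0,\cdot)$ bound across the edge is the same patching the paper relies on implicitly, and the qualitative appeal to $f\in C^\infty(\bar P)$ is used only to fix the initial value of the integrated quantity at the vertex (exactly as in the paper's proof), so it injects no unquantified constant into the final estimate.
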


\begin{proof}
Combining the previous results, we have
$$
\left| \frac{\partial \frac{2 f_{12}}{f_{22} + \frac{1}{2 x_2}}}{\partial x_2} (0, x_2) \right| = \left|  u^{12}_{~12}(0, x_2) \right| < C_1.
$$

Notice that  
$$
\lim_{x_2 \rightarrow 0}  \frac{2 f_{12}}{f_{22} + \frac{1}{2 x_2}} (0, x_2) = 0.
$$

Then

\begin{eqnarray*}
\left|  \frac{2 f_{12}}{f_{22} + \frac{1}{2 x_2}} (0, x_2) \right| < C x_2.
\end{eqnarray*}

When $x_2$ is close to $0$, Proposition (\ref{grad}) tells us that $f_{22}(0, x_2)$ is bounded. When $x_2$ is away from $0$, Lemma 4 of \cite{D3} tells us that $f_{22} + \frac{1}{2 x_2}$ is bounded. So we conclude that for $x_2 \in [0,1]$,
$$
f_{12} (0, x_2) < C.
$$
Together with the fact that $f_1(0, 0)$ is bounded, we obtain the conclusion.
\end{proof}

\begin{prop}
For any $x_1, x_2 \in [0,1]$, we have
$$
| \nabla f (x_1, x_2) | < C(C_1, C_2, P).
$$
\end{prop}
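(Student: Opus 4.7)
The plan is to extend the gradient bounds for $f$ from the two edges $\{x_1=0\}$ and $\{x_2=0\}$ (established in the preceding proposition and, by the $x_1\leftrightarrow x_2$ symmetry of the standard model, in its analogue) to the entire region $[0,1]^2\cap\bar P$. Set $\epsilon=1/C_1$.

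To bound $f_1(x_1,x_2)$ on the strip $\{x_1\le\epsilon\}$, I would run the argument of Proposition~\ref{grad} along the horizontal slice $V(t)=u(t,x_2)$ at arbitrary fixed $x_2\in[0,1]$. Guillemin's boundary condition gives $1/V''(0)=0$ and $(1/V'')'(0)=2$, while Lemma~3 of \cite{D3} yields $|(1/V'')''(t)|\le|Rm|(t,x_2)\le C_1$; two integrations produce a uniform bound on $f_{11}(t,x_2)$ for $t\in[0,\epsilon]$, and one further integration, starting from the boundary value $|f_1(0,x_2)|\le C$ (the preceding proposition), gives $|f_1(x_1,x_2)|\le C$ throughout the strip. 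On the complementary region $\{x_1\ge\epsilon\}$ I would use the one-variable convex function $s\mapsto u(s,x_2)$: it is uniformly bounded by $C(C_1,C_2,P)$ via Theorem~\ref{dia}, and elementary convexity gives $\bigl|\partial u/\partial x_1(x_1,x_2)\bigr|\le C$; since $x_1\ge\epsilon$, the logarithmic correction $\frac{1}{2}(\ln x_1+1)$ is bounded, so $|f_1(x_1,x_2)|\le C$.

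The bound on $f_2$ is obtained symmetrically, by running the Proposition~\ref{grad}-style argument along vertical slices for $x_2\le\epsilon$ and the one-variable convexity argument on vertical slices $s\mapsto u(x_1,s)$ for $x_2\ge\epsilon$, starting from the $x_1\leftrightarrow x_2$ analogue $|f_2(x_1,0)|\le C$ of the preceding proposition.

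The main technical point --- and the principal obstacle --- is to ensure that the one-variable convexity estimate applies uniformly. This requires every horizontal or vertical slice through a point of $[0,1]^2\cap\bar P$ to extend a definite distance inside $\bar P$ beyond that point before meeting another facet of $P$. Because $P$ is fixed and the region of interest sits near the distinguished vertex $O$, this distance is bounded below by a constant depending only on $P$, and all such constants can be absorbed into $C(C_1,C_2,P)$; should a slice meet another facet nearby, the same Proposition~\ref{grad}-style log expansion at that facet controls the second derivative and closes the estimate globally.
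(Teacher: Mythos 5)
Your strategy is essentially the paper's: control $f_1$ along horizontal slices $V(t)=u(t,x_2)$, using the boundary bound $|f_1(0,x_2)|<C$ from the preceding proposition together with Lemma 3 of \cite{D3} on the strip $x_1\le\epsilon$, and convexity plus the $C^0$ bound on $u$ for $x_1\ge\epsilon$. There is, however, one concrete misstep. You assert the two-sided estimate $|(1/V'')''(t)|\le |Rm|(t,x_2)$, but Lemma 3 of \cite{D3} --- as it is quoted and used everywhere in this paper --- is one-sided: $(1/V'')''\le |Rm|$. The exact identity $(1/V'')''=u^{11}_{~11}$ (and hence a two-sided bound) holds only on the edge $x_2=0$; on an interior slice one has $1/V''=u^{11}-(u^{12})^2/u^{22}\le u^{11}$ and the comparison goes only one way. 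Integrating the one-sided bound twice from the Guillemin data $1/V''(0)=0$, $(1/V'')'(0)=2$ yields only $1/V''(s)\le 2s+\tfrac{C_1}{2}s^2$, i.e.\ the \emph{lower} bound $f_{11}(s,x_2)\ge -\tfrac{C_1}{8+2C_1 s}$, not the uniform two-sided bound on $f_{11}$ that you claim. Consequently your single integration outward from $x_1=0$ controls $f_1$ from below but not from above, and the argument as written does not close.

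The repair is immediate and is exactly what the paper does: since $f_{11}\ge -C$ on $[0,\epsilon]$ and $f_1$ is controlled at \emph{both} endpoints of the slice ($x_1=0$ by the preceding proposition, $x_1=\epsilon$ by your convexity argument in the region $x_1\ge\epsilon$), almost-monotonicity gives $f_1(0,x_2)-Cs\le f_1(s,x_2)\le f_1(\epsilon,x_2)+C(\epsilon-s)$ on the whole strip. With that one-line change your proof coincides with the paper's. Your final paragraph about slices meeting other facets is a reasonable caution but is not an issue in the paper's normalization, where the unit square at the vertex $O$ stays a definite distance from the remaining facets of $P$.
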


\begin{proof}
Without loss of generality, we only need to control $\frac{\partial f}{\partial x_1}(x_1, x_2)$ for any $x_1, x_2 \in (0,1]$. In fact, we only need to consider the case when $x_1$ is small. Let us pick a constant $\epsilon > x_1$. It is easy to see that $\frac{\partial f}{\partial x_1}(\epsilon, x_2)$ is controlled by a constant depending only on $\epsilon, C_1, C_2$ and $P$. Lemma 3 in \cite{D3} shows that 
$$
\left( \frac{1}{u_{11}} \right)_{11} (s, x_2) < C_1
$$
for all $s \in [0, \epsilon]$. Let $V(t) = u(t, x_2), t \in [0, \epsilon]$, we have
$$
\left( \frac{1}{V''} \right)'' (t) < C.
$$
The calculations in Proposition (\ref{grad}) show that 
$$
f_{11} (s, x_2) > C
$$
for all $s \in [0, \epsilon]$ since $\frac{1}{2} x_2 \ln x_2$ has no contribution. Together with the fact that $\frac{\partial f}{\partial x_1}(0, x_2)$ and $\frac{\partial f}{\partial x_1}(\epsilon, x_2)$ are controlled, we obtain that $\frac{\partial f}{\partial x_1}(s, x_2)$ is controlled for all $s \in (0,\epsilon)$.
\end{proof}

As a consequence, we have proved Theorem (\ref{M}).

\section{Compactness}

Let $u^{(\alpha)}(t, x), ~ t \in [-1,0],~ x \in P$ be a sequence of modified Calabi flows  on $P$ satisfying:

\begin{itemize}

\item For any $\alpha$, the Riemannian curvature of $u^{(\alpha)}(t,x), ~ t \in [-1, 0], x \in P$ is bounded by $ C_1$ and $u^{(\alpha)}(0,x)$ is normalized.

\item For any $\alpha$ and any $t \in [-1, 0]$, let $\tilde{u}^{(\alpha)}(t,x)$ be the normalization of $u^{(\alpha)}(t,x)$, then
$$
\int_{\partial P} \tilde{u}^{(\alpha)}(t,x) ~ d \sigma < C_2.
$$
\end{itemize}

By the results of the previous section, we conclude that  $\tilde{u}^{(\alpha)}(t,x)$ satisfies the $M$-condition for all $\alpha$ and $t \in [-1,0]$. Thus the injectivity radius of $(X, u^{(\alpha)}(t,x))$ is bounded from below by Proposition 4 of \cite{D3}. By applying the weak regularity theorem or the regularity theorem \footnote{According to \cite{HZ}, the modified Calabi flow is just the pull back of the Calabi flow by a one parameter group diffeomorphisms generating by the real extremal vector field. We apply the weak regularity theorem or the regularity theorem to the corresponding Calabi flow to get the estimates. Then we get the same estimates for the modified Calabi flow.}, we obtain
\begin{eqnarray}
\label{reg}
|\nabla^k Rm^{(\alpha)}|(t, x) \leq C(k, C_1, C_2, P),
\end{eqnarray}
for all $\alpha,~ x \in P$ and $t \in [-\frac{1}{2},0]$. 

For any $\epsilon > 0$, let $P_\epsilon$ be the region in $P$ whose point is away from $\partial P$ with Euclidean distance at least $\epsilon$. By Lemma 4 and Lemma 6 of \cite{D3}, we can control $\left( \frac{\partial^2 u^{(\alpha)}(t, x)}{\partial x_i \partial x_j} \right)$, for any $\alpha, ~  t \in [-\frac{1}{2},0]$ and $x \in P_\epsilon$. 

Let us temporarily suppress $\alpha$ and consider the Abreu's equation for $t \in [-\frac{1}{2},0]$ and $x \in P_\epsilon$:

$$
- U^{ij} \left( \frac{1}{\det(D^2 u(t,x))}\right)= R(t,x).
$$

By (\ref{reg}) and Corollary 5.2 of \cite{FH}, we can control the $C^\infty$ norm  of $R(t, x)$, where the $C^\infty$ norm is for $x$ coordinates and is measured in terms of the standard Euclidean metric. Thus  we can control the $C^\infty$ norm of $u(t,x)$ by Shauder's estimates. Again, the $C^\infty$ norm is for $x$ coordinates and is measured in terms of the standard Euclidean metric. In order to control the derivatives of $u(t,x)$ in terms of $t$ and the mixed derivatives of $t$ and $x$, one can use the result of Proposition 5.4 of \cite{FH}. Notice that the Euclidean $C^0$ and $C^1$ norm of $u^{(\alpha)}(t, x)$ in space direction are controlled for $t \in [-\frac{1}{2},0], x \in P_\epsilon$ because $R(t,x)$ and $D(R(t,x))$ are controlled, where $D(R(t,x))$ is the Euclidean derivative in the space direction. Our discussions lead to the following result:

\begin{thm} 

\label{con}

By passing to a subsequence, $u^{(\alpha)}(t,x)$ converges to a smooth function $u(t,x),~ t\in [-\frac{1}{2}, 0],~ x \in P$. Moreover $u(t,x)$ are symplectic potentials on $P$ satisfying the modified Calabi flow equation.
\end{thm}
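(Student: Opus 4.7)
The plan is to extract a convergent subsequence by a diagonal argument on an exhaustion of $P$ by interior compact subregions, and then verify that the limit is a symplectic potential satisfying the flow equation. The main work has in fact already been compiled in the paragraphs preceding the theorem: on each $P_\epsilon$, combining the uniform $M$-condition (Theorem~\ref{M}), the injectivity radius lower bound (Proposition 4 of \cite{D3}), the regularity estimates (\ref{reg}), Donaldson's Lemmas 4 and 6 (which give uniform upper and lower bounds on the Hessian of $u^{(\alpha)}$ on $P_\epsilon$), Abreu's equation, and Schauder/parabolic estimates, one obtains $C^{k}$-bounds on $u^{(\alpha)}(t,x)$ uniform in $\alpha$ for $t \in [-\tfrac{1}{2},0]$ and $x \in P_\epsilon$, in both space and time directions.

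With these bounds in hand, I would first apply Arzelà--Ascoli on the cylinder $[-\tfrac{1}{2},0]\times P_{1/k}$ to extract, for each $k$, a subsequence along which $u^{(\alpha)}$ converges in every $C^m$ norm to a smooth function on that cylinder. A standard diagonal argument then produces a single subsequence, still denoted $u^{(\alpha)}$, converging smoothly on every compact subset of $[-\tfrac{1}{2},0]\times P^{\circ}$ to some smooth function $u(t,x)$.

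To upgrade this interior convergence to convergence up to $\partial P$, I would write $u^{(\alpha)}(t,x) = \tfrac{1}{2}\sum_i l_i \ln l_i + f^{(\alpha)}(t,x)$ with $f^{(\alpha)}$ smooth on $\bar{P}$. Theorem \ref{M} together with the propositions of Section 5 (in particular, $\sup_{\bar P} |\nabla f^{(\alpha)}| \leq C(C_1,C_2,P)$ and the $C^0$ control at the vertex from Proposition \ref{grad}, applied at each vertex) give $f^{(\alpha)}(t,\cdot)$ uniformly bounded in $C^1(\bar P)$, hence uniformly equicontinuous on $\bar P$. Combined with the interior smooth convergence, this forces $f^{(\alpha)}\to f$ uniformly on $\bar P$ and smoothly on compacts of $P^{\circ}$, so $u(t,x) = \tfrac{1}{2}\sum_i l_i \ln l_i + f(t,x)$ is a candidate symplectic potential.

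It remains to check that $u(t,\cdot)\in\mathcal H_{\mathbb T}$ and that $u$ satisfies the modified Calabi flow. Strict convexity of $u(t,\cdot)$ on $P^\circ$ follows because the uniform upper bound on $(u^{(\alpha)}_{ij})^{-1}$ given by Donaldson's Lemma 4 forces the Hessian of the limit to stay positive definite; the Guillemin boundary behaviour is preserved because it is already encoded in the explicit $\tfrac{1}{2}\sum l_i\ln l_i$ summand. Finally, since $R_{u^{(\alpha)}}= -\sum u^{(\alpha)ij}_{\ \ \ ij}$ and $\theta$ is fixed, smooth convergence of $u^{(\alpha)}$ on interior compact sets lets us pass $\partial_t u^{(\alpha)} = \theta - R_{u^{(\alpha)}}$ to the limit. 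I expect the main obstacle to be precisely the boundary step: controlling $f^{(\alpha)}$ uniformly up to $\partial P$ well enough to conclude the limit still obeys the Guillemin conditions, which is why Theorem \ref{M} and the gradient estimates of Section 5 were proved in the form they were.
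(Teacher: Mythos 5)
Your overall architecture matches the paper's: the interior convergence is exactly the content of the paragraphs preceding the theorem (the $M$-condition, the injectivity radius bound, the estimate (\ref{reg}), Lemmas 4 and 6 of \cite{D3}, Abreu's equation plus Schauder and the time-direction estimates), and the paper's own proof consists of one sentence observing that the only remaining issue is the Guillemin boundary condition, which it settles by citing Proposition 8 of \cite{D3}. Your diagonal/Arzel\`a--Ascoli extraction, your strict convexity argument via the upper bound on $(u^{(\alpha)}_{ij})^{-1}$ from Lemma 4, and your passage to the limit in $\partial_t u^{(\alpha)} = \theta - R_{u^{(\alpha)}}$ on interior compact sets are all fine and are implicitly what the paper intends.

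The gap is in the boundary step, which you correctly identify as the crux but then dispose of too quickly. A uniform bound on $\|f^{(\alpha)}(t,\cdot)\|_{C^1(\bar P)}$ from Theorem \ref{M} and Proposition \ref{grad} gives, after Arzel\`a--Ascoli, only a \emph{Lipschitz} limit $f$ on $\bar P$ that is smooth in $P^{\circ}$. The Guillemin boundary condition, i.e.\ membership of $u(t,\cdot)$ in $\mathcal{H}_{\mathbb{T}}$, requires $f \in C^\infty(\bar P)$ together with the correct second-order behaviour at $\partial P$ (for instance, on the facet $x_2=0$ one needs $u_{22} - \frac{1}{2x_2}$ bounded and the restriction of $u$ to the facet smooth and strictly convex); without this, the limit need not define a smooth K\"ahler metric on the compact manifold $X$. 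Writing $u = \frac{1}{2}\sum_i l_i\ln l_i + f$ with $f$ merely Lipschitz does not encode this: the singular summand only prescribes the \emph{expected} boundary behaviour, it does not force the limit to have it. Establishing smoothness of the limit up to $\partial P$ is precisely the content of Proposition 8 of \cite{D3} (which uses the $M$-condition together with the interior curvature and derivative estimates near the boundary), and your proof needs either that citation or a substitute argument controlling all derivatives of $f^{(\alpha)}$ up to $\bar P$ uniformly in $\alpha$, not just the first.
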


\begin{proof}
The only thing that we need to prove is that $u(t,x)$ satisfies the Guillemin boundary conditions. The proof is exactly as the proof of Proposition 8 of \cite{D3}. 
\end{proof}

\section{Relative $K$-Stability}
By Proposition (\ref{stable}) in the introduction section and Proposition 5.1.2 of \cite{D1}, we know that the modified Mabuchi energy is bounded from below in $\mathcal{C}_\infty$. Moreover, if $u^{(\alpha)}$ is any sequence of normalized functions in  $\mathcal{C}_\infty$ which is a minimizing sequence of the modified Mabuchi energy, Proposition 5.1.8 of \cite{D1} shows that
$$
\int_{\partial P} u^{(\alpha)} \leq C_2.
$$

Now we suppose that the Calabi flow $u(t,x)$ exists for all time and the Riemannian curvature is uniformly bounded by $C_1$. Then the corresponding modified Calabi flow exists for all time and the Riemannian curvature is also uniformly bounded by $C_1$. We still denote the modified Calabi flow as $u(t,x)$. Let us take a sequence of $t_i \rightarrow \infty$ and define a sequence of the modified Calabi flow by
$$
u^{(\alpha)}(t,x) = u(t_\alpha+t, x),~ t \in [-1,0], x \in P.
$$
For each $\alpha$, we add some affine function $l^{(\alpha)}(x)$ to $u^{(\alpha)}(t, x)$ such that  $u^{(\alpha)}(0, x)$ is normalized. Notice that the modified Mabuchi energy does not change if we add $l^{(\alpha)} (x)$ to $u^{(\alpha)}(t, x)$. With the fact that the modified Calabi flow decreases the modified Mabuchi energy, we conclude
$$
\int_{\partial P} \tilde{u}^{(\alpha)}(t,x) ~ dx \leq C_2,
$$
where $ \tilde{u}^{(\alpha)}(t,x) $ is the normalization of $u^{(\alpha)}(t, x)$ for any $\alpha$ and any $t \in [-1,0]$.

Theorem (\ref{con}) tells us that $u^{(\alpha)}(t, x)$ converges to $u^{(\infty)}(t, x),~ t \in [-\frac{1}{2},0], x \in P$ by passing to a subsequence. Since the modified Mabuchi energy is bounded from below, the modified Mabuchi energy of $u^{(\infty)}(0, x)$ is the infimum of the modified Mabuchi energy of $u(t,x)$. We can also see that the modified Mabuchi energy of $u^{(\infty)}(t, x), ~ t \in [-\frac{1}{2},0]$ is the infimum of the modified Mabuchi energy of the original modified Calabi flow. Thus the scalar curvature of $u^{(\infty)}(t,x)$ is $\theta$.

In order to show that $u(t_i, x)$ converges to an extremal metric, we only need to show that $l_\alpha(x)$ is bounded. Calabi and Chen show that the Calabi flow decreases the geodesic distance in \cite{CC}. This result also holds for the modified Calabi flow, i.e.,
$$
\int_P (u(t,x)-u^{\infty}(0,x))^2 ~ dx
$$
is decreasing as $t$ increases. It tells us that
$$
\int_P u^2(t,x) ~ dx
$$
is bounded. Together with the fact that
$$
\int_P (u^{(\alpha)}(0,x))^2 ~ dx
$$
is bounded, we conclude that the affine function $l^{(\alpha)}(x)$ is bounded. Thus by passing to a subsequence, $u(t_i,x)$ converges to an extremal metric $u_\infty(x)$. Let us write the K\"ahler metric of $u(t_i,x), u_\infty(x)$ as $\omega(t_i, x), \omega_\infty(x)$ respectively. Let $\varphi_i$ be the difference of the Legendre transform of $u(t_i,x)$ and $u_\infty(x)$. We obtain
$$
 \omega(t_i) = \omega_\infty + \sqrt{-1} \partial \bar{\partial} \varphi_i.
$$
In page 118 of \cite{D4}, Donaldson shows that
$$
||\varphi_i||_{L^\infty} = ||u(t_i) - u_\infty||_{L^\infty}.
$$
Thus the $L^\infty$ norm of $\varphi_i$ is independent of $i$. Since the $L^\infty$ norm of $Ric(\varphi_i)$ is also independent of $i$, we conclude that the $C^{3,\alpha}$ norm of $\varphi_i$ is also independent of $i$ by Theorem 5.1 of \cite{ChenHe}. Notice that the distance between $u(t_i,x)$ and $u_\infty(x)$ goes to 0 as $i$ goes to infinity. Applying the results of \cite{HZ}, we obtain the exponential convergence of the modified Calabi flow. Hence we complete the proof of Theorem (\ref{convergence}).

\section{Discussion}

It is easy to see that the analytic relative $K$-stability is stronger than the algebraic relative $K$-stability. But when the Futaki invariant vanishes, Donaldson \cite{D1} shows that algebraic $K$-stability implies that there exists a constant $\lambda > 0$ such that for all normalized convex function $f$ in $\mathcal{C}_\infty$, we have
$$
\mathcal{L} (f) > \lambda \int_{\partial P} f ~ d \sigma.
$$
Thus we obtain the following stronger result when the Futaki vanishes.

\begin{thm}
Let $X$ be a toric surface with ample line bundle $L$. Suppose $(X,L)$ is algebraic $K$-stable with vanishing Futaki invariant. Let $\omega \in c_1(L)$ be any toric invariant K\"ahler metric. If the Calabi flow starting from $\omega$ exists for all time and the Riemannian curvature is uniformly bounded along the flow, then the Calabi flow converges exponentially fast to a cscK metric in $c_l(L)$.
\end{thm}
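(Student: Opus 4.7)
The plan is to reduce the statement to the argument already carried out for Theorem \ref{convergence}. When the Futaki invariant vanishes, the affine function $\theta$ produced by the $L^2$ projection is simply the constant $\underline{R}$, so the extremal vector field $\mathcal{X}$ vanishes, the modified Calabi flow coincides with the Calabi flow, the modified Mabuchi energy $\mathcal{M}$ coincides with the ordinary Mabuchi energy, and an extremal metric in $c_1(L)$ is the same as a cscK metric. It therefore suffices to verify that every step in the proof of Theorem \ref{convergence} still goes through once ``relative $K$-stable'' is replaced by ``algebraic $K$-stable with vanishing Futaki invariant''.

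The only place where the relative $K$-stability hypothesis entered was Proposition \ref{stable}, whose proof invoked \cite{WZ}. In the vanishing Futaki setting this is replaced by the result of Donaldson quoted in the paragraph preceding the statement: algebraic $K$-stability of $(X,L)$ with zero Futaki invariant yields a constant $\lambda > 0$ such that
\[
\mathcal{L}(f) \geq \lambda \int_{\partial P} f \, d\sigma
\]
for every normalized convex $f \in \mathcal{C}_\infty$. This one substitution gives a uniform lower bound for $\mathcal{M}$ and, via Proposition 5.1.8 of \cite{D1}, a uniform bound on $\int_{\partial P} f \, d\sigma$ along any minimizing sequence, which are exactly the analytical inputs used in Section 6.

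With this coercivity in hand, I would repeat the Section 6 argument verbatim. Fix $t_\alpha \to \infty$ and set $u^{(\alpha)}(t,x) = u(t_\alpha + t, x) + l^{(\alpha)}(x)$, where the affine function $l^{(\alpha)}$ is chosen so that $u^{(\alpha)}(0,\cdot)$ is normalized; this does not alter $\mathcal{L}$ or $\mathcal{M}$. Monotonicity of $\mathcal{M}$ along the Calabi flow combined with the coercivity estimate forces $\int_{\partial P} \tilde{u}^{(\alpha)}(t,\cdot) \, d\sigma \leq C_2$ uniformly. Together with the curvature hypothesis, Theorems \ref{dia} and \ref{M} apply, and Theorem \ref{con} yields a smooth subsequential limit $u^{(\infty)}(t,x)$, $t \in [-\tfrac{1}{2},0]$, solving the Calabi flow and realising the infimum of $\mathcal{M}$; hence $R_{u^{(\infty)}} \equiv \underline{R}$ at every time, so $u^{(\infty)}(0,\cdot)$ is a cscK symplectic potential.

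To upgrade this to convergence of the original flow, I would invoke Calabi--Chen's theorem that the Calabi flow contracts the Mabuchi $L^2$ distance, which bounds $\int_P u(t,x)^2 \, dx$ and pins down the translating affine functions $l^{(\alpha)}$; thus $u(t_i,\cdot) \to u_\infty$ along a subsequence. Donaldson's identity $\|\varphi_i\|_{L^\infty} = \|u(t_i) - u_\infty\|_{L^\infty}$ from p.~118 of \cite{D4}, combined with uniform Ricci bounds and Theorem 5.1 of \cite{ChenHe}, then yields $C^{3,\alpha}$ convergence of the K\"ahler potentials, and the exponential rate near a cscK metric from \cite{HZ} closes the argument. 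Since all the analytic heavy lifting was already done in Theorems \ref{dia}, \ref{M}, \ref{con}, and \ref{convergence}, I do not expect a genuine obstacle; the main point is simply the observation that in the zero-Futaki case Donaldson's older coercivity estimate replaces the Wang--Zhu input used in Proposition \ref{stable}.
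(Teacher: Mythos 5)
Your proposal is correct and follows exactly the route the paper takes: the paper's entire justification is the paragraph preceding the theorem, observing that Donaldson's coercivity estimate for algebraically $K$-stable polytopes with vanishing Futaki invariant replaces Proposition \ref{stable}, after which the proof of Theorem \ref{convergence} applies verbatim since the modified Calabi flow coincides with the Calabi flow. Your write-up simply makes explicit the steps the paper leaves implicit.
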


It is an interesting question whether the algebraic $K$-stability implies the analytic $K$-stability when $X$ is a toric surface.

Hongnian Huang, \ hnhuang@gmail.com

CMLS

Ecole Polytechnique
\end{document}